\newcommand{\Rq}{{\mathbf R}^{4}}
\newcommand{\Rt}{{\mathbf R}^{3}}
\newcommand{\de}{\text{\rm d}\mspace{1mu}}
\newcommand{\fff}{\boldsymbol{I}}
\newcommand{\sff}{\mathbf{I} \mathbf{I}}
\newcommand{\He}{\text{Hess}\mspace{1mu}}
\newcommand{\rank}{\rm rank\,}
\newcommand{\beq}{\begin{equation}}
\newcommand{\eeq}{\end{equation}}
\newcommand{\ba}{\begin{align}}
\newcommand{\ea}{\end{align}}
\newcommand{\baa}{\begin{align*}}
\newcommand{\eaa}{\end{align*}}
\newtheorem{theorem}{Theorem}
\newtheorem{lemma}{Lemma}
\newtheorem{prop}{Proposition}
\newtheorem*{wineq}{Wintgen inequality}
\theoremstyle{definition}
\newtheorem{defn}{Definition}
\theoremstyle{remark}
\newtheorem{rem}{Remark}
\begin{document}

\title{Local geometry of surfaces in $\mathbf R^4$}

\author{J. Basto-Gon\c calves}
\date{\today}
\address{Centro de Matem\'atica da Universidade do
Porto, Portugal}
\email{jbg@fc.up.pt}
\thanks{The research of the  author at Centro de Matem\'atica da Universidade do Porto (CMUP) was
funded by the European Regional Development Funding FEDER through the programme COMPETE and by the Portuguese Government through the FCT -- Funda\c{c}\~ao para a Ci\^encia e a
Tecnologia under the project PEst-C/MAT/UI0144/2011. and  Calouste Gulbenkian
Foundation}

\subjclass{Primary:}
\keywords{}

\begin{abstract}
The indicatrix or curvature ellipse and the characteristic curve of a surface in $\mathbf R^4$ are presented, as well as  the projective duality connecting them. The characterisation of points in the surfaces as elliptic, parabolic and hyperbolic points,  and the inflection points, are also discussed,.
\end{abstract}
\maketitle

\section{Introduction}

For a surface $S$ in $\mathbf R^3$, the Dupin indicatrix  is a conic in the tangent space $T_pS$ at a point $p$ that gives local information on the geometry of the surface, at least at generic points where the conic is non degenerate; the points are hyperbolic or elliptic as the Dupin indicatrix is a hyperbola or a ellipse, or equivalently, as the Gauss curvature is negative or positive, and parabolic when the Gauss curvature vanishes.

For surfaces in $\Rq$ there is no exact analogue of the Dupin indicatrix, but the indicatrix or curvature ellipse and the characteristic curve give a similar type of local information. The indicatrix at a point $p\in S$ is an ellipse in the normal plane $N_pS$ at $p$, and the characteristic curve is a conic, but not necessarily an ellipse, also in the normal plane. A generic point is hyperbolic or elliptic as the characteristic curve is a hyperbola or an ellipse, or as the origin is outside or inside the indicatrix, but the relation with the Gauss curvature is somewhat lost: the curvature is negative at a generic hyperbolic points but it is not always positive, or at least non negative, at elliptic points.

The results discussed here have been known for a long time \cite{kom,mw}, and some of them have been presented in a more contemporary fashion in \cite{little}, and subsequently in \cite{dcc, rdcc}. The objective of this work is to present a more detailed and complete description of the construction of the two conics, the indicatrix and the characteristic curve, and of the relation between them.

Associated to the indicatrix and the characteristic curve there are special normal directions, the binormals, and tangent directions, the asymptotic directions. Their analogy with the similarly named objects in $\Rt$ is best understood in the context of the singularities in the contact of hyperplanes with the surface \cite{dcc}, or of lines with the surface, as presented in the last section.

\newpage
\section{Moving frames}

We consider a surface $S\subset \mathbf R^4$ locally given by a parametrization:
\[
\Xi : U\subset\mathbf R^2\longrightarrow \Rq
\]
and  a set $\{e_1,e_2,e_3,e_4\}$  of orthonormal vectors, depending on $(x,y)\in U$,  satisfying:
\begin{itemize}
\item $e_1(x,y)$ and $e_2(x,y)$ span the tangent space $T_{\Xi(x,y)}S$ of $S$ at $\Xi(x,y)$.
\item $e_3(x,y)$ and $e_4(x,y)$ span the normal space $N_{\Xi(x,y)}S$ of $S$ at $\Xi(x,y)$.
\end{itemize}
Then $\Xi,\{e_1,e_2,e_3,e_4\}$ is an \emph{adapted moving frame} for $S$. Associated to this frame, there is a dual basis for 1-forms, $\{\omega_1,\omega_2,\omega_3,\omega_4\}$.

If we take $U$ small enough, $\Xi$ can be assumed to be an embedding; then the vectors $e_i$ and the 1-forms  $\omega_i$ can be extended to an open subset of $\Rq$.
We define new 1-forms by:
\[
\omega_{ij}=De_i\cdot e_j, \hbox{ also written as } \omega_{ij}=\de e_i\cdot e_j,\quad i,j=1,\ldots,4
\]
where the exterior differential is taken componentwise.

The pullbacks by $\Xi$ are defined by:
\[
\omega_i(v)=D\Xi(v)\cdot e_i,\hbox{ also written as } \omega_i=\de\Xi\cdot e_i
\]
and
\[
\omega_{ij}(v)=De_i(v)\cdot e_j,\quad i,j=1,\ldots,4,\qquad v\in \mathbf R^2
\]
With a slight abuse of notation, we denote the forms on $\Rq$ and their pullbacks to $U$ by the same symbol.
 
The Maurer-Cartan structure equations can be obtained \cite{manfredo}  using $\de\de=0$:
\beq
\de \omega_i=\sum_{j=1}^4 \omega_{ij} \wedge\omega_j,\qquad
\de \omega_{ij}=\sum_{k=1}^4 \omega_{ik} \wedge\omega_{kj}
\eeq

The 1-form $\omega_{12}$ is the connection form for the tangent bundle of $S$, and $\omega_{34}$ is the connection form for the normal bundle of $S$; $\de\omega_{12}$ and 
$\de\omega_{34}$ are the respective curvature forms.
The \emph{Gaussian curvature} $K$ and the \emph{normal curvature} $\kappa$ are defined \cite{little}, respectively, by:
\beq
\de \omega_{12}=-K \omega_1\wedge \omega_2,\qquad
\de \omega_{34}=-\kappa\, \omega_1\wedge \omega_2
\eeq

The forms $\omega_1$ and $\omega_2$ are independent, and $\sigma_S=\omega_1\wedge\omega_2$ is the \emph{area element} on $S$; in fact:
\begin{prop}
The 2-form $\sigma_S=\omega_1\wedge\omega_2$ is independent of the choice of frames, and it is globally defined.
\end{prop}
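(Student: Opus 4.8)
The plan is to use the fact that any two adapted moving frames over the same open set differ by a gauge transformation respecting the orthogonal splitting $T_pS\oplus N_pS$, and to observe that $\omega_1\wedge\omega_2$ sees only the tangential part of that transformation, and only through its determinant.

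First I would set up the change of frame. If $\{e_1,e_2,e_3,e_4\}$ and $\{\tilde e_1,\tilde e_2,\tilde e_3,\tilde e_4\}$ are two adapted orthonormal frames over the same $U$, then since $\{e_1,e_2\}$ and $\{\tilde e_1,\tilde e_2\}$ are orthonormal bases of the same plane $T_pS$, and $\{e_3,e_4\}$, $\{\tilde e_3,\tilde e_4\}$ orthonormal bases of the same plane $N_pS$, there are smooth maps $A=(a_{ij})\colon U\to O(2)$ and $B\colon U\to O(2)$ with $\tilde e_i=\sum_{j=1}^{2}a_{ij}e_j$ for $i=1,2$ and $\tilde e_i=\sum_{j=3}^{4}b_{ij}e_j$ for $i=3,4$. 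From $\omega_i=\de\Xi\cdot e_i$ I then get, for $i=1,2$,
\[
\tilde\omega_i=\de\Xi\cdot\tilde e_i=\sum_{j=1}^{2}a_{ij}\,\de\Xi\cdot e_j=\sum_{j=1}^{2}a_{ij}\,\omega_j,
\]
the normal rotation $B$ playing no role because $\de\Xi(v)$ is always tangent to $S$.

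The computation then finishes at once:
\[
\tilde\omega_1\wedge\tilde\omega_2=(a_{11}\omega_1+a_{12}\omega_2)\wedge(a_{21}\omega_1+a_{22}\omega_2)=(\det A)\,\omega_1\wedge\omega_2=\pm\,\omega_1\wedge\omega_2,
\]
with the $+$ sign exactly when $A$ preserves the chosen orientation of $S$. Restricting, as is implicit in speaking of an adapted moving frame on an oriented surface, to positively oriented tangent frames gives $\det A\equiv 1$, hence $\tilde\omega_1\wedge\tilde\omega_2=\omega_1\wedge\omega_2$: the 2-form $\sigma_S$ does not depend on the choice of adapted frame. Equivalently, evaluated on $v,w\in T_pS$, $\sigma_S(v,w)$ is the signed area in $T_pS$ of the parallelogram spanned by $\de\Xi(v)$ and $\de\Xi(w)$, a quantity that does not refer to any frame; in a parametrization it equals $\sqrt{EG-F^{2}}\,\de x\wedge\de y$, so it is also independent of the parametrization, and it is nowhere zero since $\omega_1,\omega_2$ are independent.

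For the global assertion I would cover $S$ by open sets each carrying an adapted moving frame, hence each carrying a 2-form defined as above; by the invariance just proved these 2-forms agree on overlaps, so they patch to a single 2-form on $S$. The one point that needs care — rather than being the core of the argument — is orientability: the patching produces a well-defined global $\sigma_S$ once $S$ is oriented and the frames are chosen compatibly, whereas on a non-orientable $S$ the above yields a global object only up to sign, namely the Riemannian area density.
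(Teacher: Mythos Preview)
The paper states this proposition without proof, so there is no argument to compare against. Your proof is correct and is the standard one: the key observation is that a change of adapted frame acts block-diagonally with the tangential block in $O(2)$, so $\omega_1\wedge\omega_2$ transforms by $\det A=\pm 1$, and restricting to positively oriented frames on an oriented $S$ gives invariance, hence a globally well-defined area form by patching. Your remark that without an orientation one only gets a density (a sign ambiguity) is exactly the caveat one should make; the paper is implicitly assuming $S$ oriented when it speaks of the area element.
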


From $\omega_3=\omega_4=0$ it follows:
\begin{align*}
\de\omega_3=&0=\omega_{31}\wedge \omega_1+\omega_{32}\wedge \omega_2\\
\de\omega_4=&0=\omega_{41}\wedge \omega_1+\omega_{42}\wedge \omega_2
\end{align*}
and by Cartan's lemma \cite{manfredo},  there exist $a$, $b$, $c$, $e$, $f$, and $g$ such that:
\begin{align}\label{ae}
\omega_{13}=&a\mspace{1mu} \omega_1+b\mspace{1mu}\omega_2,& \omega_{14}=&e\mspace{1mu} \omega_1+f\mspace{1mu}\omega_2\\
\notag
\omega_{23}=&b\mspace{1mu} \omega_1+c\mspace{1mu}\omega_2,& \omega_{24}=&f\mspace{1mu} \omega_1+g\mspace{1mu}\omega_2\notag
\end{align}

While the image of $D\Xi$ is the tangent space of $S$, the image of the second derivative $D^2\Xi$ has both tangent and normal components; the vector valued quadratic form associated to the normal component:
\beq
(D^2\Xi\cdot e_3) e_3+(D^2\Xi\cdot e_4) e_4
\eeq
is the \emph{second fundamental form} $\sff$ of $S$. It can be written \cite{little} as $\sff_1e_3+\sff_2 e_4$, where:
\begin{subequations}\label{sff}
\begin{align}
\sff_1=&a\mspace{1mu} \omega_1^2+2b\mspace{1mu}  \omega_1  \omega_2+c\mspace{1mu}  \omega_2^2\label{sff1}\\
\sff_2=&e\mspace{1mu}  \omega_1^2+2f\mspace{1mu}  \omega_1  \omega_2+g\mspace{1mu}  \omega_2^2\label{sff2}
\end{align}
\end{subequations}

Let  $\mathcal M_1$ and $\mathcal M_2$ be the matrices associated to the above quadratic forms:
\[
\mathcal M_1=\left[\begin{matrix}
a&b\\
b&c
\end{matrix}\right],\qquad
\mathcal M_2=\left[\begin{matrix}
e&f\\
f&g
\end{matrix}\right]
\]

The \emph{mean curvature} $\mathcal H$ is defined by:
\beq
\mathcal H=\dfrac{1}{2}\left(\mathcal H_1+\mathcal H_2\right),
\quad
\mathcal H_i=\hbox{Tr}\mspace{2mu}\mathcal M_i , \qquad i=1,2
\eeq
and it is easy to verify that similarly the Gaussian curvature is given by:
\beq
K=K_1+K_2 ,
\quad K_i= \det\mathcal M_i , \qquad i=1,2
\eeq

We can express the Gaussian, normal and mean curvature in terms of the coefficients of the second fundamental form \cite{little}:
\begin{align}\label{curvatures}
K=&(ac-b^2)+(eg-f^2)\\
\kappa=&(a-c)f-(e-g)b\notag\\
\mathcal H=&\dfrac{1}{2}(a+c)e_3+\dfrac{1}{2}(e+g)e_4\notag
\end{align}

\section{Monge form}

We consider a surface $S$ locally given by a parametrisation:
\[
\Xi : (x,y)\mapsto (x,y,\varphi(x,y),\psi(x,y))
\]
where $\Phi=(\varphi, \psi)$ has vanishing first jet at the origin,  $j^1\Phi(0)=0$.

The vectors $T_1$ and $T_2$ span the tangent space of $S$:
\[
T_1=\Xi_x=(1,0,\varphi_x,\psi_x),\quad T_2=\Xi_y=(0,1,\varphi_y,\psi_y)
\]
the index $z$ standing for derivative with respect to $z$.

The induced metric in $S$ is given by the first fundamental form:
\[
\fff=E\de x^2+2F\de x\de y+G\de y^2
\]
where:
\[
E=T_1\cdot T_1,\quad F=T_1\cdot T_2,\quad G=T_2\cdot T_2,
\]
We define:
\[
W=EG-F^2
\]

Instead of an orthonormal frame, it is more convenient to take a basis:
\begin{align}
T_1=&(1,0,\varphi_x,\psi_x),& & T_2=(0,1,\varphi_y,\psi_y)\\
\notag
N_1=&(-\varphi_x,-\varphi_y,1,0), && N_2=(-\psi_x,-\psi_y,0,1)
\end{align}
The vectors $T_1$ and $T_2$ span the tangent space, and the vectors $N_1$ and $N_2$ span the normal space. We define:
\[
\hat E=N_1\cdot N_1,\quad \hat F=N_1\cdot N_2,\quad \hat G=N_2\cdot N_2,
\]
and it is easy to verify that:
\[
\hat E  \hat G-\hat F^2=W
\]

Now consider the orthonormal frame defined by:
\begin{align}
e_1=&\dfrac{1}{\sqrt{E}}T_1,& & e_2=\sqrt{\dfrac{1}{EW}}\left(ET_2-F T_1\right)\\
\notag
e_3=&\dfrac{1}{\sqrt {\hat E}}N_1, && e_4=\sqrt{\dfrac{1}{\hat EW}}\left(\hat EN_2-\hat F N_1\right)
\end{align}
It is easy to see that:
\[
\omega_1(\dot x T_1+\dot y T_2)=\dfrac{1}{\sqrt{E}}(E\dot x+F\dot y), \quad
\omega_2(\dot x T_1+\dot y T_2)=\sqrt{\dfrac{W}{E}}\dot y
\]
or equivalently:
\beq\label{omegas}
\omega_1=\dfrac{1}{\sqrt{E}}(E\de x+F\de y), \quad
\omega_2=\sqrt{\dfrac{W}{E}}\de y
\eeq
Also:
\begin{align}\label{abc}
a=&\dfrac{1}{E\sqrt{\hat E}}\varphi_{xx}\\
\notag
b=&\dfrac{1}{E\sqrt{ W\hat E}}(E\varphi_{xy}-F\varphi_{xx})\\
\notag
c=&\dfrac{1}{EW\sqrt{\hat E}}(E^2\varphi_{yy}-2EF\varphi_{xy}+F^2\varphi_{xx})\\\label{efg}
e=&\dfrac{1}{E\sqrt{\hat E W}}\left(\hat E\psi_{xx}-\hat F\varphi_{xx}\right)\\
\notag
f=&\dfrac{1}{EW\sqrt{\hat E }}\left(E(\hat E\psi_{xy}-\hat F\varphi_{xy})-F(\hat E\psi_{xx}-\hat F\varphi_{xx})\right)\\
\notag
g=&\dfrac{1}{EW\sqrt{W\hat E }}\left(E^2(\hat E\psi_{yy}-\hat F\varphi_{yy})-\right.\\
\notag
&\phantom{\dfrac{1}{EW\sqrt{W\hat E }}}\left.-2EF(\hat E\psi_{xy}-\hat F\varphi_{xy})+F^2(\hat E\psi_{xx}-\hat F\varphi_{xx})\right)
\end{align}

Then, using these formul\ae\ or those from  \cite{am1,am2}, we obtain the following expressions for the Gaussian and normal curvature:

\begin{prop}
The Gaussian curvature is  given by:
\beq
K=\dfrac{1}{W^2}(\hat E H_\psi-\hat FQ+\hat G H_\varphi)\label{K}
\eeq
where:
\[
H_f=\He(f)=\left|
\begin{array}{cc}
f_{xx} &f_{xy}\\
f_{xy} & f_{yy}
\end{array}
\right|, \quad
Q=\left|
\begin{array}{cc}
\varphi_{xx} & \varphi_{xy}\\
\psi_{xy} & \psi_{yy}
\end{array}
\right|-\left|
\begin{array}{cc}
\varphi_{xy} & \varphi_{yy}\\
\psi_{xx} & \psi_{xy}
\end{array}\right|
\]
\end{prop}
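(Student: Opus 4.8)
The plan is to prove \eqref{K} by direct substitution. The starting point is the splitting $K=K_1+K_2=(ac-b^2)+(eg-f^2)$ from \eqref{curvatures}, into which I insert the Monge-form expressions \eqref{abc} and \eqref{efg} for the coefficients of the second fundamental form; the two $2\times2$ determinants collapse in essentially the same way, and it then remains only to reassemble the pieces with the help of the identity $\hat E\hat G-\hat F^2=W$ recorded above.

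First I would compute $K_1=ac-b^2$. Substituting the formulas for $a$, $b$, $c$ from \eqref{abc}, every product carries the common factor $(E^2W\hat E)^{-1}$, and in the difference $ac-b^2$ the terms proportional to $F^2\varphi_{xx}^2$ and to $EF\varphi_{xx}\varphi_{xy}$ cancel, leaving $E^2(\varphi_{xx}\varphi_{yy}-\varphi_{xy}^2)$ in the numerator. Hence
\[
K_1=ac-b^2=\frac{1}{W\hat E}\,H_\varphi .
\]

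Next I would compute $K_2=eg-f^2$. Abbreviating $P_{xx}=\hat E\psi_{xx}-\hat F\varphi_{xx}$ and likewise $P_{xy}$, $P_{yy}$, one checks directly that $(e,f,g)$ in \eqref{efg} is obtained from $(a,b,c)$ in \eqref{abc} by replacing $\varphi_{xx},\varphi_{xy},\varphi_{yy}$ with $P_{xx},P_{xy},P_{yy}$ and multiplying by $W^{-1/2}$; the same cancellation as above therefore gives
\[
K_2=eg-f^2=\frac{1}{W^2\hat E}\bigl(P_{xx}P_{yy}-P_{xy}^2\bigr).
\]
Expanding the bracket in $\varphi$ and $\psi$, the coefficient of $\hat E^2$ is $H_\psi$, the coefficient of $\hat F^2$ is $H_\varphi$, and the coefficient of $-\hat E\hat F$ is $\varphi_{xx}\psi_{yy}+\varphi_{yy}\psi_{xx}-2\varphi_{xy}\psi_{xy}$; the point is that this last combination is exactly $Q$, since the definition of $Q$ in the statement unfolds to
\begin{align*}
Q&=(\varphi_{xx}\psi_{yy}-\varphi_{xy}\psi_{xy})-(\varphi_{xy}\psi_{xy}-\varphi_{yy}\psi_{xx})\\
&=\varphi_{xx}\psi_{yy}+\varphi_{yy}\psi_{xx}-2\varphi_{xy}\psi_{xy}.
\end{align*}
Thus $K_2=\dfrac{1}{W^2}\Bigl(\hat E H_\psi+\dfrac{\hat F^2}{\hat E}H_\varphi-\hat F Q\Bigr)$.

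Finally I would add the two contributions:
\[
K=K_1+K_2=\frac{1}{W^2}\left(\frac{W+\hat F^2}{\hat E}\,H_\varphi+\hat E H_\psi-\hat F Q\right),
\]
and then invoke $\hat E\hat G-\hat F^2=W$, i.e.\ $W+\hat F^2=\hat E\hat G$, so that the coefficient of $H_\varphi$ reduces to $\hat G$ and \eqref{K} follows. The only step that is not pure bookkeeping is recognising that the mixed-Hessian term produced by $K_2$ coincides with $Q$; everything else is the cancellation in two nearly identical $2\times2$ determinants, and the whole argument could instead be quoted from \cite{am1,am2}.
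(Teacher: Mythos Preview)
Your argument is correct and is exactly the computation the paper has in mind: it states the proposition immediately after the formulae \eqref{abc}--\eqref{efg} with the remark ``using these formul\ae\ or those from \cite{am1,am2}'' and gives no further proof, so your substitution of $a,\dots,g$ into $K=(ac-b^2)+(eg-f^2)$ together with $\hat E\hat G-\hat F^2=W$ simply spells out what the paper leaves implicit.
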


\begin{prop}
The  normal curvature is given by:
\beq
\kappa=\dfrac{1}{W^2}(EL-FM+GN)\label{N}\
\eeq
where:
\[
L=\left|
\begin{array}{cc}
\varphi_{xy} & \varphi_{yy}\\
\psi_{xy} & \psi_{yy}
\end{array}
\right|, \quad
M=\left|
\begin{array}{cc}
\varphi_{xx} & \varphi_{yy}\\
\psi_{xx} & \psi_{yy}
\end{array}
\right|, \quad
N=\left|
\begin{array}{cc}
\varphi_{xx} & \varphi_{xy}\\
\psi_{xx} & \psi_{xy}
\end{array}
\right|
\]
\end{prop}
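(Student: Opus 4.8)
The plan is to take the identity $\kappa=(a-c)f-(e-g)b$ from \eqref{curvatures} and substitute the Monge-form coefficients \eqref{abc}--\eqref{efg}, then reduce the resulting rational expression to the stated determinantal form. First I would pull out the scalar prefactors, writing $a-c=\tfrac{1}{EW\sqrt{\hat E}}\,P$ and $b=\tfrac{1}{E\sqrt{W\hat E}}\,R$ with
\[
P=W\varphi_{xx}-E^2\varphi_{yy}+2EF\varphi_{xy}-F^2\varphi_{xx},\qquad R=E\varphi_{xy}-F\varphi_{xx},
\]
and likewise $e-g=\tfrac{1}{EW\sqrt{W\hat E}}\,\widetilde P$, $f=\tfrac{1}{EW\sqrt{\hat E}}\,\widetilde R$, where $\widetilde P=\hat E\,P_\psi-\hat F\,P$, $\widetilde R=\hat E\,R_\psi-\hat F\,R$, and $P_\psi$, $R_\psi$ denote $P$, $R$ with $\varphi$ replaced by $\psi$. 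Both products $(a-c)f$ and $(e-g)b$ then carry the common factor $1/(E^2W^2\hat E)$, so that $\kappa=\tfrac{1}{E^2W^2\hat E}\,(P\widetilde R-\widetilde P R)$.

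The next step is the key algebraic observation that $P\widetilde R-\widetilde P R=\hat E\,(PR_\psi-P_\psi R)$: the two copies of the term $\hat F\,PR$ cancel and one $\hat E$ factors out, leaving $\kappa=\tfrac{1}{E^2W^2}\,(PR_\psi-P_\psi R)$, free of $\hat E$ and $\hat F$. This cancellation is expected rather than accidental: $\kappa$ is a normal-bundle invariant---rotating the orthonormal normal frame only adds an exact form to $\omega_{34}$, hence leaves $\de\omega_{34}$ and so $\kappa$ unchanged---so it cannot depend on the frame obtained by orthonormalising $N_1,N_2$, i.e.\ on $\hat E$ or $\hat F$; this also serves as a running check while doing the substitution.

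For the last step I would introduce the Hessian-coefficient vectors $\mathbf u=(\varphi_{xx},\varphi_{xy},\varphi_{yy})$, $\mathbf v=(\psi_{xx},\psi_{xy},\psi_{yy})$ in $\Rt$, together with $\mathbf a=(EG-2F^2,\,2EF,\,-E^2)$ and $\mathbf b=(-F,\,E,\,0)$; reading off \eqref{abc}--\eqref{efg} gives $P=\mathbf a\cdot\mathbf u$, $R=\mathbf b\cdot\mathbf u$, $P_\psi=\mathbf a\cdot\mathbf v$, $R_\psi=\mathbf b\cdot\mathbf v$, so that
\[
PR_\psi-P_\psi R=(\mathbf a\cdot\mathbf u)(\mathbf b\cdot\mathbf v)-(\mathbf a\cdot\mathbf v)(\mathbf b\cdot\mathbf u)=(\mathbf a\times\mathbf b)\cdot(\mathbf u\times\mathbf v)
\]
by the Lagrange (Binet--Cauchy) identity. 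A one-line cross-product computation using $W=EG-F^2$ gives $\mathbf a\times\mathbf b=E^2(E,F,G)$, while $\mathbf u\times\mathbf v=(L,-M,N)$ directly from the definitions of $L$, $M$, $N$; hence $PR_\psi-P_\psi R=E^2(EL-FM+GN)$ and $\kappa=\tfrac{1}{W^2}(EL-FM+GN)$. The only real obstacle is the bookkeeping in the first reduction---keeping the $\sqrt{\hat E}$, $\sqrt W$ prefactors straight and confirming the $\hat F$-terms vanish---after which the Lagrange identity makes the rest immediate; alternatively one could obtain \eqref{N} by the same substitution starting from the formulas of \cite{am1,am2}, or, more laboriously, directly from $\de\omega_{34}=-\kappa\,\omega_1\wedge\omega_2$ after computing the connection form $\omega_{34}$.
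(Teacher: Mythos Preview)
Your argument is correct in every detail: the prefactor bookkeeping, the cancellation $P\widetilde R-\widetilde P R=\hat E(PR_\psi-P_\psi R)$, the cross-product $\mathbf a\times\mathbf b=E^2(E,F,G)$, and the identification $\mathbf u\times\mathbf v=(L,-M,N)$ all check out and lead exactly to $\kappa=\tfrac{1}{W^2}(EL-FM+GN)$.

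As for comparison with the paper: the paper gives no proof at all for this proposition---it simply states that the formula follows by substituting \eqref{abc}--\eqref{efg} into $\kappa=(a-c)f-(e-g)b$, or alternatively from the formulas in \cite{am1,am2}. Your proposal is therefore consistent with, and strictly more informative than, the paper's treatment. What you have added is a genuinely useful organisation of the brute-force substitution: the observation that the $\hat E,\hat F$-dependence must drop out for invariance reasons, and the packaging of the remaining bilinear expression via the Lagrange identity, turn a potentially messy expansion into two short steps. A reader following the paper's terse ``using these formul\ae'' would otherwise face a direct expansion of $(a-c)f-(e-g)b$ with no guidance on how the many terms collapse.
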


A surface immersed in $\mathbf R^4$ has an induced metric defined on it through the first fundamental form, and therefore an intrinsic Gauss curvature. Our previous definition of Gauss curvature agrees with it, and it is possible to prove more:

\begin{theorem}[Killing]
The intrinsic Gauss curvature $K_G$ of $S$ at a point $p\in S$ is the sum of the curvatures $K_1$ and $K_2$ of the projections $S_1$ and $S_2$ of the surface along any two orthogonal normal directions $n_2\in N_p S$ and $n_1\in N_p S$ respectively.
\end{theorem}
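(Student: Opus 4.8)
The plan is to reduce Killing's theorem to an infinitesimal, coordinate-friendly statement by adapting the Monge form to the point $p$. First I would place $p$ at the origin and rotate in $\Rq$ so that $T_pS=\C\times\{0\}$ (the $(x,y)$-plane) and $N_pS=\{0\}\times\Rd$ is spanned by the chosen orthogonal unit normals $n_1,n_2$; a further rotation of $(x,y)$ lets me assume the first fundamental form is the identity at $0$, i.e.\ $E=G=1$, $F=0$ and $W=1$ at the origin. Writing $S$ as $(x,y)\mapsto(x,y,\varphi(x,y),\psi(x,y))$ with $j^1\Phi(0)=0$, the projection $S_1$ along $n_2$ is the graph of $\varphi$ over $T_pS$ and $S_2$ along $n_1$ is the graph of $\psi$; by the classical $\Rt$ formula, the Gauss curvatures of these graphs at $0$ are $K_1=\varphi_{xx}\varphi_{yy}-\varphi_{xy}^2=H_\varphi$ and $K_2=\psi_{xx}\psi_{yy}-\psi_{xy}^2=H_\psi$, since the first-order terms vanish at $0$.

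The second step is to evaluate the intrinsic curvature $K_G$ at the origin using the formula \eqref{K} already established: at $0$ we have $W=1$; moreover $\hat E=1+\varphi_x^2+\varphi_y^2=1$, $\hat G=1+\psi_x^2+\psi_y^2=1$ and $\hat F=\varphi_x\psi_x+\varphi_y\psi_y=0$ at the origin because $j^1\Phi(0)=0$. Substituting these values into \eqref{K} collapses it to
\[
K_G(0)=\hat E\,H_\psi-\hat F\,Q+\hat G\,H_\varphi\big|_0=H_\psi(0)+H_\varphi(0)=K_2+K_1,
\]
which is exactly the asserted identity for the chosen pair of normal directions. To finish, I would note that the identity is stated for \emph{any} orthogonal pair of unit normals at $p$; but a different orthonormal pair $(n_1',n_2')$ in $N_pS$ is obtained from $(n_1,n_2)$ by an $\mathrm{SO}(2)$ rotation, and under such a rotation the pair of Hessians $(\He(\varphi),\He(\psi))$ at $p$ transforms so that the sum of determinants $\det\He(\varphi)+\det\He(\psi)$ is preserved --- this is the same computation that shows $K=K_1+K_2$ is frame-independent in \eqref{curvatures}, or can be seen directly since a rotation replaces the pair of symmetric matrices $(\mathcal M_1,\mathcal M_2)$ by $(\cos\theta\,\mathcal M_1+\sin\theta\,\mathcal M_2,\,-\sin\theta\,\mathcal M_1+\cos\theta\,\mathcal M_2)$ and one checks $\det$ of the first plus $\det$ of the second equals $\det\mathcal M_1+\det\mathcal M_2$.

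The only real subtlety is making sure the reduction to the normalized Monge form is legitimate and that ``curvature of the projection $S_i$'' is being computed with respect to the correct induced metric: the projection onto $T_pS$ along $n_2$ need not be an isometry away from $p$, but since all first-order terms of $\Phi$ vanish at the origin, the pulled-back metric on each $S_i$ agrees with the Euclidean metric to first order at $0$, so the Gauss curvature at that one point is indeed given by the bare Hessian determinant with no metric correction terms. I expect this normalization bookkeeping --- confirming that the $O(|x|)$ discrepancies in $E,F,G,\hat E,\hat F,\hat G$ do not contribute at $p$ --- to be the main point requiring care; everything else is the direct substitution above together with the $\mathrm{SO}(2)$-invariance of $K_1+K_2$.
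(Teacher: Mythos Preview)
Your argument is correct, but it follows a different route from the paper's own proof. The paper never invokes formula~\eqref{K}; instead it computes $K_G$ directly from the Brioschi formula for the intrinsic curvature in terms of $E,F,G$ and their derivatives, observes that at the origin this collapses to $-\tfrac12 E_{yy}+F_{xy}-\tfrac12 G_{xx}$, and then uses the elementary decomposition $E=E_1+E_2-1$, $F=F_1+F_2$, $G=G_1+G_2-1$ (with $E_i,F_i,G_i$ the first fundamental form coefficients of the graph $S_i$) so that the expression splits additively into the Brioschi values for $S_1$ and $S_2$. This is a purely intrinsic, first-fundamental-form argument: the second fundamental form and the moving-frame identity $K=\det\mathcal M_1+\det\mathcal M_2$ play no role, and in fact the paper uses Killing's theorem afterwards to \emph{confirm} that the connection-form $K$ agrees with $K_G$.

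Your route instead feeds the normalised Monge data into the extrinsic formula~\eqref{K} and reads off $H_\varphi+H_\psi$; this is shorter, but it presupposes that the quantity in~\eqref{K} is already known to be the intrinsic curvature $K_G$ --- something the paper treats as the conclusion rather than the hypothesis. If you are happy to take $K=K_G$ as given (it is, after all, just the statement that $\omega_{12}$ is the Levi--Civita connection form), then your argument is perfectly valid and arguably cleaner; your added $\mathrm{SO}(2)$-invariance check for $\det\mathcal M_1+\det\mathcal M_2$ is correct and makes the independence from the choice of normal pair explicit, something the paper leaves implicit in the initial coordinate choice. One small remark: the ``further rotation of $(x,y)$'' you mention is unnecessary --- once $j^1\Phi(0)=0$, the first fundamental form is automatically the identity at the origin regardless of the $(x,y)$-frame.
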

\begin{proof}By a linear change of coordinates and a translation of the origin, we can assume that $n_1\in N_p S$ spans the third axis and $n_2\in N_p S$ the fourth, and also that $p$ is the origin.

The surface $S$ is  locally given by a parametrisation:
\[
\Xi : (x,y)\mapsto (x,y,\varphi(x,y),\psi(x,y))
\]
where $\Phi=(\varphi, \psi)$ has vanishing first jet at the origin,  $j^1\Phi(0)=0$.

The intrinsic Gauss curvature $K_G$ of $S$ is given by Brioschi formula \cite{spivak2}:
\beq
K_G=\dfrac{
\left|\begin{matrix}
-\dfrac{1}{2}E_{yy}+F_{xy}-\dfrac{1}{2}G_{xx}&\dfrac{1}{2}E_x&F_x-\dfrac{1}{2}E_y\\[8pt]
F_y-\dfrac{1}{2}G_x&E&F\\[8pt]
\dfrac{ 1}{2}G_y&F&G
\end{matrix}\right |
-
\left|\begin{matrix}
0&\dfrac{1}{2}E_y&\dfrac{1}{2}G_x\\[8pt]
\dfrac{1}{2}E_y&E&F\\[8pt]
\dfrac{1}{2}G_x&F&G
\end{matrix}\right|}
{(EG-F^2)^2}
\eeq

At the origin:
\[
E=G=1, \quad F=0
\]
and all first order derivatives of $E$, $F$ and $G$ vanish. Thus the Brioschi formula gives:
\[
K_G=-\dfrac{1}{2}E_{yy}+F_{xy}-\dfrac{1}{2}G_{xx}\quad\hbox{at the origin}
\]
The surfaces $S_1$ and $S_2$ are the graphs of $\varphi$ and $\psi$ respectively, and their intrinsic Gauss curvatures agree with the definition of $K_1=ac-b^2$ and $K_2=eg-f^2$ above.

If $E_i$, $F_i$ and $G_i$ are the coefficients of the first fundamental forms of $S_i$, $i=1,2$, we have:
\[
E=E_1+E_2-1,\quad F=F_1+F_2,\quad G=G_1+G_2-1
\]
and therefore it follows from linearity of the derivatives that:
\[
K_G=K_1+K_2
\]
\end{proof}

As we have remarked before, the Gauss curvature can be given by $K=K_1+K_2$, and therefore it agrees with the intrinsic Gauss curvature $K_G$.

\section{Curvature ellipse}

The \emph{curvature ellipse} or \emph{indicatrix} $\mathcal E$ of the surface $S$ is the image under the second fundamental form of the unit circle in the tangent space:
\[
\mathcal E_p=\{v\in N_pS \ | \ v=\sff(u), \ u\in T_pS, \ |u|=1\}
\]

Let $u\in T_pS$ with $|u|=1$; then $\sff (u)$ is the normal curvature vector at $p$ of any curve $\gamma$ on $S$ such that:
\[
\gamma(0)=p,\quad \dot\gamma(0)=u
\]
and in fact it is the curvature if we choose $\gamma$ appropriately:

\begin{lemma}
Let $\gamma$ be the curve passing though $p$, parametrized by arc length from $p$, obtained as the intersection of the surface $S$ with the hyperplane containing the normal space at $p$ and $u$. Then, if $\chi_{\gamma}$ is the curvature of $\gamma$ at $p$:
\[
\chi_{\gamma}=\sff(u)
\]
\end{lemma}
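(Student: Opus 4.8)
The plan is to reduce the statement to a direct computation using the fact that, for a curve parametrized by arc length, the acceleration vector $\ddot\gamma(0)$ decomposes into a tangential part (which is the geodesic curvature contribution) and a normal part (which is exactly $\sff(\dot\gamma(0))$ by definition of the second fundamental form), and then to argue that the special choice of $\gamma$ in the statement forces the tangential part to vanish, so that the full curvature $\chi_\gamma$ coincides with its normal component $\sff(u)$.

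First I would set up coordinates adapted to the situation: choose the parametrization $\Xi$ so that $p=\Xi(0)$ and $u=e_1$ at $p$, and let $H$ be the hyperplane through $p$ spanned by $u=e_1$ together with the normal space $N_pS$, i.e.\ by $e_1, e_3, e_4$. The curve $\gamma = S\cap H$ passes through $p$ with $\dot\gamma(0)$ proportional to $e_1$ (since $T_pS\cap H$ is the line through $e_1$), and after arc-length reparametrization $\dot\gamma(0)=e_1=u$. The curvature of a curve parametrized by arc length is $\chi_\gamma = |\ddot\gamma(0)|$ when $\ddot\gamma(0)\perp\dot\gamma(0)$; more invariantly the curvature \emph{vector} is the component of $\ddot\gamma(0)$ orthogonal to $\dot\gamma(0)$, and the claim identifies this vector with $\sff(u)$.

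Next I would split $\ddot\gamma(0)$ using the orthonormal frame $\{e_1,e_2,e_3,e_4\}$ at $p$. The normal component $(\ddot\gamma\cdot e_3)e_3 + (\ddot\gamma\cdot e_4)e_4$ equals $\sff(u)$: this is precisely the content of the definition of $\sff$ as the normal component of $D^2\Xi$ evaluated on the unit tangent vector $u$ — more concretely, differentiating $\dot\gamma\cdot e_i=0$ for $i=3,4$ gives $\ddot\gamma\cdot e_i = -\dot\gamma\cdot\dot e_i = \omega_{i1}(u)\cdot(\text{scalar})$, which by \eqref{ae} and \eqref{sff} reproduces $\sff_1(u)$ and $\sff_2(u)$. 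The tangential component is $(\ddot\gamma\cdot e_2)e_2$ (the $e_1$-component being zero by arc-length parametrization). The key remaining point is that this $e_2$-component vanishes: since $\gamma$ stays inside the hyperplane $H$ which contains $e_1,e_3,e_4$ but is transverse to $e_2$ at $p$, we have $\gamma(t)\cdot e_2^\star = 0$ identically (where $e_2^\star$ is the unit normal to $H$, which at $p$ is $e_2$), hence $\ddot\gamma(0)\cdot e_2 = 0$. Therefore $\ddot\gamma(0) = \sff(u)$ exactly, with no tangential part, which proves $\chi_\gamma = \sff(u)$.

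The main obstacle I anticipate is handling the arc-length reparametrization cleanly: a priori $\gamma$ comes parametrized by, say, the ambient coordinate $x$ restricted to the slice, and passing to arc length introduces a reparametrization whose first-order term rescales $\dot\gamma$ to unit length and whose second-order term adds a multiple of $\dot\gamma(0)=e_1$ to $\ddot\gamma(0)$. One must check that this added term is purely along $e_1$ and hence does not affect the curvature vector (the component orthogonal to $\dot\gamma$), and that after rescaling to $|\dot\gamma(0)|=1$ the normal component is exactly $\sff(u)$ with $|u|=1$ — this is where the hypothesis $|u|=1$ is used. A careful but routine chain-rule computation settles this; the conceptual input is entirely the orthogonal decomposition of acceleration plus the fact that the hyperplane slice kills the single tangential normal direction $e_2$.
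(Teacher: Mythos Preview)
Your argument is correct and is essentially the paper's own proof, only spelled out in greater detail: the paper simply notes that $\ddot\gamma(0)\perp u$ (from arc-length parametrization) and that $\gamma$ lies in the hyperplane $H$, hence $\ddot\gamma(0)$ has no $e_2$-component and is therefore purely normal, equal to $\sff(u)$. Your discussion of the reparametrization issue and the explicit decomposition in the frame $\{e_1,e_2,e_3,e_4\}$ makes explicit exactly the steps the paper compresses into two lines.
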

\begin{proof}
As $\gamma$ is a plane curve parametrized by arc length we have:
\[
\dfrac{d^2}{ds^2}\gamma(0)=\chi_{\gamma}, \qquad \dfrac{d^2}{ds^2}\gamma(0)\perp u=\dfrac{d}{ds}\gamma(0)
\]
and therefore the second derivative has only normal component and it is given by the second fundamental form $\sff(u)$.
\end{proof}

As  $u$ describes the unit circle in the tangent space, its image $\sff (u)$ describes the curvature ellipse:

\begin{theorem}[Moore, Wilson \cite{mw}]
The indicatrix $\mathcal E$ of the surface $S$ is an ellipse.
\end{theorem}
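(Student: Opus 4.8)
\emph{Proof proposal.}
The plan is to compute $\sff(u)$ directly as $u$ runs over the unit circle of $T_pS$ and to recognise the resulting curve as an affine image of a circle. Writing $u=\cos\theta\,e_1+\sin\theta\,e_2$, the formulas \eqref{sff1} and \eqref{sff2} give the components of $\sff(u)$ in the normal frame $\{e_3,e_4\}$ as
\[
\sff_1(u)=a\cos^2\theta+2b\cos\theta\sin\theta+c\sin^2\theta,\qquad
\sff_2(u)=e\cos^2\theta+2f\cos\theta\sin\theta+g\sin^2\theta .
\]
First I would apply the identities $\cos^2\theta=\tfrac12(1+\cos2\theta)$, $\sin^2\theta=\tfrac12(1-\cos2\theta)$ and $2\sin\theta\cos\theta=\sin2\theta$ to put this in the form
\[
\sff(u)=\mathcal H+\cos2\theta\,v+\sin2\theta\,w ,
\]
where $\mathcal H=\tfrac12(a+c)e_3+\tfrac12(e+g)e_4$ is the mean curvature vector, $v=\tfrac12(a-c)e_3+\tfrac12(e-g)e_4$ and $w=b\,e_3+f\,e_4$.

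Once this is done, the conclusion is immediate: as $\theta$ varies the point $\sff(u)$ runs over the set $\{\,\mathcal H+\cos\phi\,v+\sin\phi\,w : \phi\in[0,2\pi)\,\}$, which is the image of the unit circle under the affine map $\phi\mapsto\mathcal H+\cos\phi\,v+\sin\phi\,w$ from $\Rd$ into $N_pS$. The image of a circle under an affine map is an ellipse, centred here at $\mathcal H$, which degenerates to a segment or a point precisely when $v$ and $w$ are linearly dependent. This establishes the statement, understood as usual to allow these degenerate cases; at a generic point $v\wedge w\neq0$ and the curvature ellipse is a genuine ellipse.

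There is essentially no obstacle here — the proof is the one trigonometric rewriting above — but two features of the computation are worth flagging for later use: the map $\theta\mapsto\sff(u)$ traverses the curvature ellipse \emph{twice} as $u$ makes one turn of the unit circle, since $2\theta$ runs around twice, so antipodal tangent directions $u$ and $-u$ give the same point of $\mathcal E$; and the centre of the ellipse is exactly the mean curvature vector $\mathcal H$. Both facts will be convenient when relating $\mathcal E$ to the characteristic curve and to the position of the origin of $N_pS$ relative to $\mathcal E$.
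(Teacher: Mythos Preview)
Your argument is correct and is essentially the same as the paper's: both compute $\sff(\cos\theta\,e_1+\sin\theta\,e_2)$, apply the double-angle identities to write the result as $\mathcal H$ plus a linear image of $(\cos2\theta,\sin2\theta)$, and conclude that the indicatrix is a (possibly degenerate) ellipse centred at $\mathcal H$. The only cosmetic difference is that the paper packages your vectors $v,w$ into the columns of a $2\times2$ matrix $\mathcal A$.
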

\begin{proof}
Consider the map $\eta$ from the unitary tangent bundle $UTS$ of $S$ into the normal bundle $NS$ given by:
\beq
\eta (\theta)=\sff (\cos\theta\, e_1+\sin\theta\, e_2)
\eeq

From its definition:
\begin{align}
\eta (\theta)=&(a\cos^2\theta+2b\cos\theta\sin\theta+c\sin^2\theta)e_3+\label{10}\\
&+(e\cos^2\theta+2f\cos\theta\sin\theta+g\sin^2\theta)e_4 \notag\\
=&\mathcal H+\dfrac{1}{2}((a-c)\cos2\theta+b\sin2\theta)e_3+\notag\\
&+\dfrac{1}{2}((e-g)\cos2\theta+f\sin2\theta)e_4\notag
\end{align}

Let:
\[
\mathcal A=\left[
\begin{matrix}
\dfrac{1}{2}((a-c)&b\\[10pt]
\dfrac{1}{2}(e-g)&f\end{matrix}
\right]
\]
Then:
\[
\eta (\theta)=\mathcal H+\mathcal Aw, \quad w=(\cos2\theta,\sin2\theta)
\]
Thus the indicatrix, the image of $\eta$, is an ellipse, possibly singular, centred ar $\mathcal H$.
\end{proof}

\begin{prop}[\cite{little}]
The normal curvature $\kappa$ is related to the oriented area $A$ of the curvature ellipse by:
\beq
\dfrac{\pi}{2}\kappa=A
\eeq
\end{prop}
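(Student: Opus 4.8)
The plan is to compute the oriented area of the curvature ellipse directly from the parametrization $\eta(\theta)=\mathcal H+\mathcal A w$ with $w=(\cos 2\theta,\sin 2\theta)$ obtained in the proof of the Moore--Wilson theorem, and then compare with the formula $\kappa=(a-c)f-(e-g)b$ from \eqref{curvatures}. Since translation by the mean curvature vector $\mathcal H$ does not change the area, the ellipse is the image of the unit circle $w\mapsto \mathcal A w$ in the normal plane under the linear map $\mathcal A$. As $\theta$ runs over $[0,2\pi)$, the angle $2\theta$ covers the circle twice, so the curve $\eta$ traverses the ellipse twice; I will keep track of this factor of $2$.

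The key computation is the oriented area swept by a parametrized curve, $A=\tfrac12\oint (x\,dy-y\,dx)$, applied to the components of $\mathcal A w$ in the orthonormal normal frame $\{e_3,e_4\}$. Writing $\mathcal A w=(p(\theta),q(\theta))$ with
\[
p(\theta)=\tfrac12(a-c)\cos 2\theta+b\sin 2\theta,\qquad
q(\theta)=\tfrac12(e-g)\cos 2\theta+f\sin 2\theta,
\]
one gets $p\,q'-q\,p'=2\det\mathcal A$ (a constant, as always for an ellipse parametrized by a rotating vector), where
\[
\det\mathcal A=\tfrac12(a-c)f-\tfrac12 b(e-g)=\tfrac12\kappa .
\]
Integrating over $\theta\in[0,2\pi)$ and dividing by $2$ gives $\oint(p\,dq-q\,dp)=2\pi\kappa$, so the oriented area counted with multiplicity is $\pi\kappa$; since the ellipse is covered twice, its genuine oriented area is $A=\tfrac{\pi}{2}\kappa$, which is the claim. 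Equivalently, one may argue that the image ellipse $\{\mathcal A w:|w|=1\}$ has area $\pi|\det\mathcal A|$ with the sign of $\det\mathcal A$ recording orientation, giving $A=\pi\det\mathcal A=\tfrac{\pi}{2}\kappa$ at once.

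The only genuinely delicate point is bookkeeping of the \emph{orientation and the multiplicity}: one must fix the orientation of the normal plane by the ordered frame $(e_3,e_4)$ so that $\det\mathcal A$ (and hence $\kappa$) carries the correct sign, and one must be careful that $\eta$ double-covers the ellipse, which is exactly what turns the naive $\pi\kappa$ into $\tfrac{\pi}{2}\kappa$. I would also note that the identity $p\,q'-q\,p'\equiv 2\det\mathcal A$ is precisely the statement that $w\mapsto\mathcal A w$ multiplies signed areas by $\det\mathcal A$, so no case analysis (singular versus nondegenerate ellipse) is needed: the formula holds even when $\mathcal E$ degenerates to a segment, both sides then vanishing. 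Finally one checks that this definition of oriented area is frame-independent, which follows since a change of adapted frame acts on $(e_3,e_4)$ by an element of $SO(2)$ when orientations are preserved, leaving $\det\mathcal A$ unchanged, consistently with $\kappa$ being well defined up to the chosen orientation.
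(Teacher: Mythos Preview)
Your proof is correct and follows essentially the same route as the paper: from the Moore--Wilson parametrization $\eta(\theta)=\mathcal H+\mathcal A\,(\cos 2\theta,\sin 2\theta)$ one observes that the indicatrix is the image of the unit circle under $\mathcal A$, traversed twice, so its oriented area is $\pi\det\mathcal A=\tfrac{\pi}{2}\kappa$. The paper invokes the fact ``linear map multiplies area by determinant'' directly, whereas you first verify it via the line integral $\tfrac12\oint(p\,dq-q\,dp)$ and then state the same shortcut; your added remarks on orientation, frame-independence, and the degenerate case are welcome but not present in the paper's argument.
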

\begin{proof}
As:
\[
\eta (\theta)=\mathcal H+\mathcal Aw, \quad w=(\cos2\theta,\sin2\theta)
\]
$\eta (\theta)$ describes twice an ellipse, the curvature ellipse or indicatrix, centred at $\mathcal H$; the oriented area of the ellipse will then be the area of the unit circle multiplied by the determinant of the matrix $\mathcal A$
and therefore:
\[
A=\pi\dfrac{1}{2}((a-c)f-(e-g)b)=\dfrac{\pi}{2}\kappa
\]
\end{proof}

The curvature ellipse at a point  $p\in S$ can be used to  characterize that point; in particular:
\begin{itemize}
\item  $p$ is a \emph{circle point} if the curvature ellipse at $p$ is a circumference.
\item $p$ is a \emph{minimal point} if the curvature ellipse at $p$ is centred at the origin, $\mathcal H(p)=0$.
\item $p$ is an \emph{umbilic point} if the curvature ellipse at $p$ is a circumference centred at the origin; the point is both a minimal and a circle point.
\end{itemize}

At a non umbilic point it is always possible to find canonical moving frames~\cite{w} for which the computations are easier:

\begin{prop}\label{canframe}
Given any point $p\in S\subset \Rq$ such that $p$ is not an umbilic point, there exists  a canonical moving frame around $p$ for which:
\begin{itemize}
\item $b\equiv 0$.
\item $e\equiv g$
\item $\dfrac{1}{2}(a-c)\ge |f |\ge 0$
\end{itemize}
\end{prop}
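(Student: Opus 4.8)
The plan is to record how the coefficients $a,b,c,e,f,g$ transform under the changes of adapted moving frame that fix $p$, and then to choose the frame so that the $2\times2$ matrix
\[
\mathcal A=\left[\begin{matrix}\dfrac12(a-c)&b\\ \dfrac12(e-g)&f\end{matrix}\right]
\]
from the proof of the Moore--Wilson theorem (the matrix whose image is the curvature ellipse translated to its centre $\mathcal H$) is diagonalised with its entries ordered as in the statement. Since the first and second rows of $\mathcal A$ are $\bigl(\tfrac12(a-c),\,b\bigr)$ and $\bigl(\tfrac12(e-g),\,f\bigr)$, the conditions $b\equiv0$ and $e\equiv g$ together say exactly that $\mathcal A$ is diagonal, while $\tfrac12(a-c)\ge|f|\ge0$ orders its entries; so everything reduces to diagonalising $\mathcal A$.

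First I would note that, at a fixed point, an orientation-compatible adapted frame is determined up to a rotation $e_1\mapsto\cos\phi\,e_1+\sin\phi\,e_2$, $e_2\mapsto-\sin\phi\,e_1+\cos\phi\,e_2$ of the tangent pair together with a rotation by an angle $\alpha$ of the normal pair $(e_3,e_4)$. Writing $R(t)$ for rotation by $t$ in the plane, and using from the proof of the Moore--Wilson theorem that $\eta(\theta)=\mathcal H+\mathcal A\,(\cos2\theta,\sin2\theta)$: a tangent rotation replaces $\theta$ by $\theta+\phi$, hence multiplies $(\cos2\theta,\sin2\theta)$ by $R(2\phi)$ and so sends $\mathcal A\mapsto\mathcal A\,R(2\phi)$; a normal rotation re-expresses the $(e_3,e_4)$-coordinates of $\eta$ and so sends $\mathcal A\mapsto R(-\alpha)\,\mathcal A$ and $\mathcal H\mapsto R(-\alpha)\,\mathcal H$. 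Hence a change of frame acts by $\mathcal A\mapsto R(-\alpha)\,\mathcal A\,R(2\phi)$.

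I would then diagonalise $\mathcal A$ by a singular value decomposition realised inside $SO(2)\times SO(2)$: choose $\phi$ so that $R(2\phi)$ diagonalises the symmetric matrix $\mathcal A^{\top}\mathcal A$, so that $\mathcal A\,R(2\phi)$ has orthogonal columns of lengths $\sigma_1\ge\sigma_2\ge0$ (the semiaxes of the curvature ellipse), and then choose $\alpha$ so that $R(-\alpha)$ carries the longer column onto the first normal axis. In the new frame $\mathcal A$ is the diagonal matrix with entries $\sigma_1$ and $\varepsilon\sigma_2$, where $\varepsilon=\pm1$; since $\det\mathcal A=\tfrac12\kappa$ (this is the proportionality of the oriented area of the ellipse to $\kappa$) and $\det$ is unchanged under $SO(2)\times SO(2)$, $\varepsilon$ is the sign of $\kappa$. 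Thus $\tfrac12(a-c)=\sigma_1\ge0$, $b=0$, $e=g$, and $|f|=\sigma_2\le\sigma_1$: exactly the required conditions, with the third inequality becoming an equality precisely when the curvature ellipse is a circle. Note that this already holds pointwise at every point, umbilic or not; the hypothesis enters only in the next step.

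The main obstacle is upgrading this to a genuine moving frame on a neighbourhood of $p$, that is, making $\phi$ and $\alpha$ depend smoothly on the base point. Where $\sigma_1>\sigma_2$ near $p$, the eigenvalues of $\mathcal A^{\top}\mathcal A$ stay simple, so the spectral projections — hence $\phi$, and then $\alpha$ (here $\sigma_1>0$) — are smooth; the cases $\sigma_2=0<\sigma_1$ and $\mathcal A\equiv0$ cause no trouble either. A smooth choice can fail only at a point where $\sigma_1=\sigma_2>0$, i.e. where the curvature ellipse is a genuine circle and its axes are undetermined; the umbilic points are among these and are excluded by hypothesis, and the remainder of the argument — which is where the non-umbilic assumption, following \cite{w}, is actually used — consists in analysing the remaining circle points and showing the field of ellipse-axes can be combed smoothly there. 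This is the exact counterpart, with the same topological flavour, of the fact that a smooth field of principal directions exists precisely away from the umbilics of a surface in $\Rt$.
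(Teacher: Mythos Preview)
Your SVD argument is the algebraic rendering of the paper's geometric construction, and the pointwise part is correct: aligning $e_3$ with the major axis of the curvature ellipse and choosing $e_1$ so that its image under $\sff$ lands at the endpoint of that axis is exactly the singular value decomposition of $\mathcal A$, and your transformation law $\mathcal A\mapsto R(-\alpha)\,\mathcal A\,R(2\phi)$ is the right way to see this. The one place the paper says something you do not is at non-umbilic circle points, where the axes of the ellipse are undetermined: there the paper breaks the tie concretely by taking $e_3$ along the line through the origin and the centre $\mathcal H$ (nonzero precisely because the point is not umbilic), whereas you leave this as an unfinished ``combing'' remark. In fact the proposition is only ever used pointwise in the paper (e.g.\ in the proof of the Wintgen inequality), so your pointwise diagonalisation already suffices for the applications; the smooth-dependence discussion in your final paragraph, while more scrupulous than what the paper itself provides, is not needed and is the only part you leave incomplete.
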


\begin{proof}
We choose $e_3\in N_pS$ parallel to the major axis of the ellipse of curvature, and $e_4\in N_pS$ normal to it; then $e_1\in T_pS$ is chosen along the direction whose image under the second fundamental form is spanned by $e_3$, and $e_2\in T_pS$ normal to $e_1$, so that $\{e_1,e_2\}$ has the correct orientation. If the ellipse of curvature is a circle (not centred at the origin) the direction of $e_3$ is the line defined by the origin and the centre of that circle; if the ellipse degenerates into a radial segment, $e_3$ is chosen along the line spanned by the segment.

The ambiguity in the choices of $e_1$ and $e_3$ allows  $\{e_1, e_2, e_3, e_4\}$ to have the standard orientation in $\Rq$, and also to have $a-c\ge 0$.
\end{proof}

Now, $(a-c)/2$ and $|f|$ are the major and minor semi-axes of the curvature ellipse respectively, and formul\ae\ ~(\ref{curvatures}) become:
\begin{align}\label{cancurv}
K=&ac+e^2-f^2\\
\kappa=&(a-c)f\notag\\
\mathcal H=&\dfrac{1}{2}(a+c)e_3+e\, e_4\notag
\end{align}

A necessary and sufficient condition~\cite{little} for $p$ to be a circle point is that:
\beq
\mathcal H^2-K=|\kappa|
\eeq
In fact, we have:

\begin{wineq}[\cite{wintgen}]
If $S$ is an immersed  surface in $\Rq$, then at every point $p\in S$ we have the inequality:
\beq
\label{wineq}
\mathcal H^2\ge K+|\kappa |
\eeq
The point $p$ is a circle point if and only if $\mathcal H^2 = K+|\kappa |$.
\end{wineq}
\begin{proof}
Using a canonical moving frame around $p$, assumed to be not an umbilic point, we have:
\begin{align}
0 &  \le (a-c-2|f|)^2=(a-c)^2+4f^2-4(a-c)|f|=\label{14}\\
\notag &=(a-c)^2+4f^2-4|\kappa|=a^2+c^2-2ac+4f^2-4|\kappa|=\\
\notag &=a^2+c^2+2f^2+2e^2-2K-4|\kappa|\\
4\mathcal H^2 & = (a+c)^2+4e^2=a^2+c^2+2ac+4e^2\label{15}\\
\notag &=a^2+c^2+2f^2+2e^2+2K
\end{align}
The inequality follows immediately from (\ref{15})-(\ref{14}):
$
4\mathcal H^2\ge 4K+4|\kappa|
$

The curvature ellipse is a circle if and only if the two semi-axes are equal:
\[
\dfrac{1}{2}(a-c)=|f|
\]
and this is exactly when we have equality above.

There remains to consider the case where $p$ is an umbilic point, where we should have $K+|\kappa|\equiv 0$; but at an umbilic point we must have $a+c=0$ and $e+g=0$ and:
\[
\mathcal A=\left[
\begin{matrix}
\dfrac{1}{2}((a-c)&b\\[10pt]
\dfrac{1}{2}(e-g)&f\end{matrix}
\right]=
\left[
\begin{matrix}
a&b\\
&\\
e&f\end{matrix}
\right]
\]
a multiple of an orthogonal matrix, so:
\[
a^2+b^2=e^2+f^2=R^2, \quad (a,b)\perp (e,f)
\]
It follows that  $|a|=|f|$, $ |b|=|e| $ and $K=-|\kappa|$ as desired.
\end{proof}

Thus at an umbilic point we always have a nonpositive Gaussian curvature.

By identifying $p$ with the origin of $N_p S$,  the points of $S$ may be classified according to their position with respect to the curvature ellipse, that we assume to be non degenerate ($\kappa(p)\ne 0$), as follows:

\begin{itemize}
\item  $p$ lies outside the curvature ellipse.

\noindent
The point is said to be a
\emph{hyperbolic point} of $S$. The \emph{asymptotic directions} are the tangent directions whose images span the two normal lines tangent to the indicatrix passing through the origin;   the \emph{binormals} are the normal directions perpendicular  to those normal lines.

 \begin{figure}[thhh]
\begin{center}
\includegraphics[width= \linewidth]{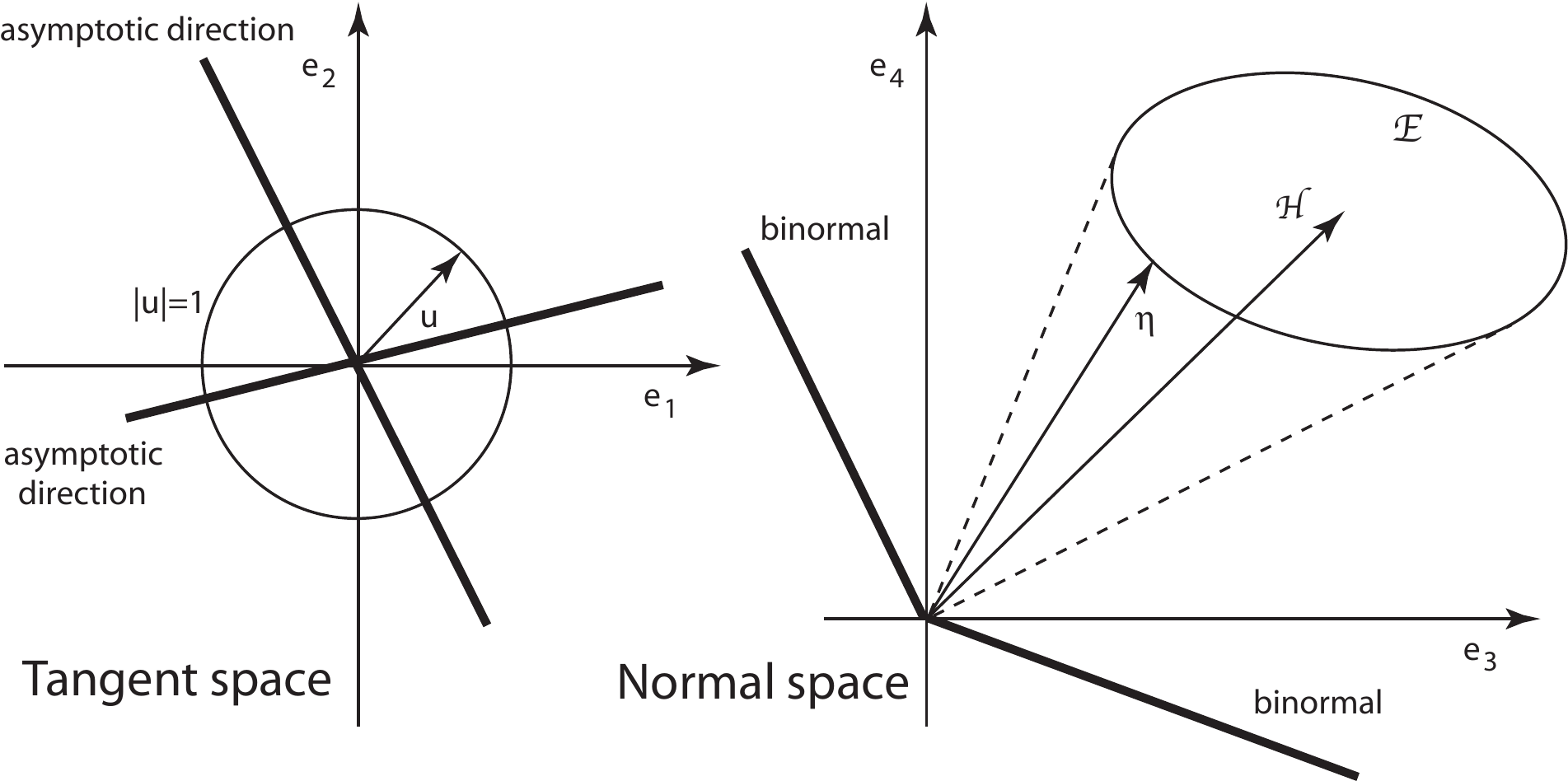}
\end{center}
\caption{Indicatrix at a hyperbolic point: 2 binormals}\label{indxas}
\end{figure}

\item $p$ lies inside the curvature ellipse.

\noindent
The point $p$  is an \emph{elliptic point}. There are no binormals and no asymptotic directions.

\begin{figure}[hhhh]
\begin{center}
\includegraphics[width=0.6 \linewidth]{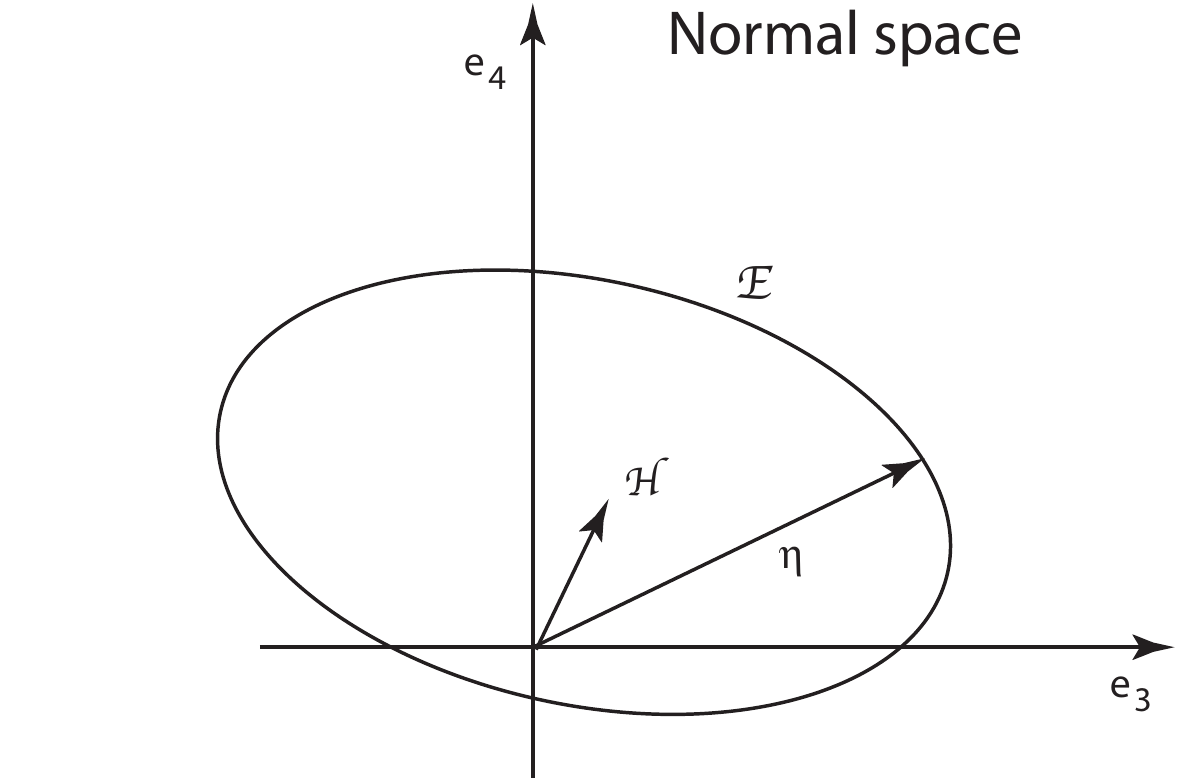}
\end{center}
\caption{Indicatrix at an elliptic point: no binormals}\label{indxe}
\end{figure}

\item $p$ lies on the curvature ellipse.

\noindent
The point $p$ is a \emph{parabolic point}. There is  one binormal and one asymptotic direction.

\begin{figure}[hhhh]
\begin{center}
\includegraphics[width=0.9 \linewidth]{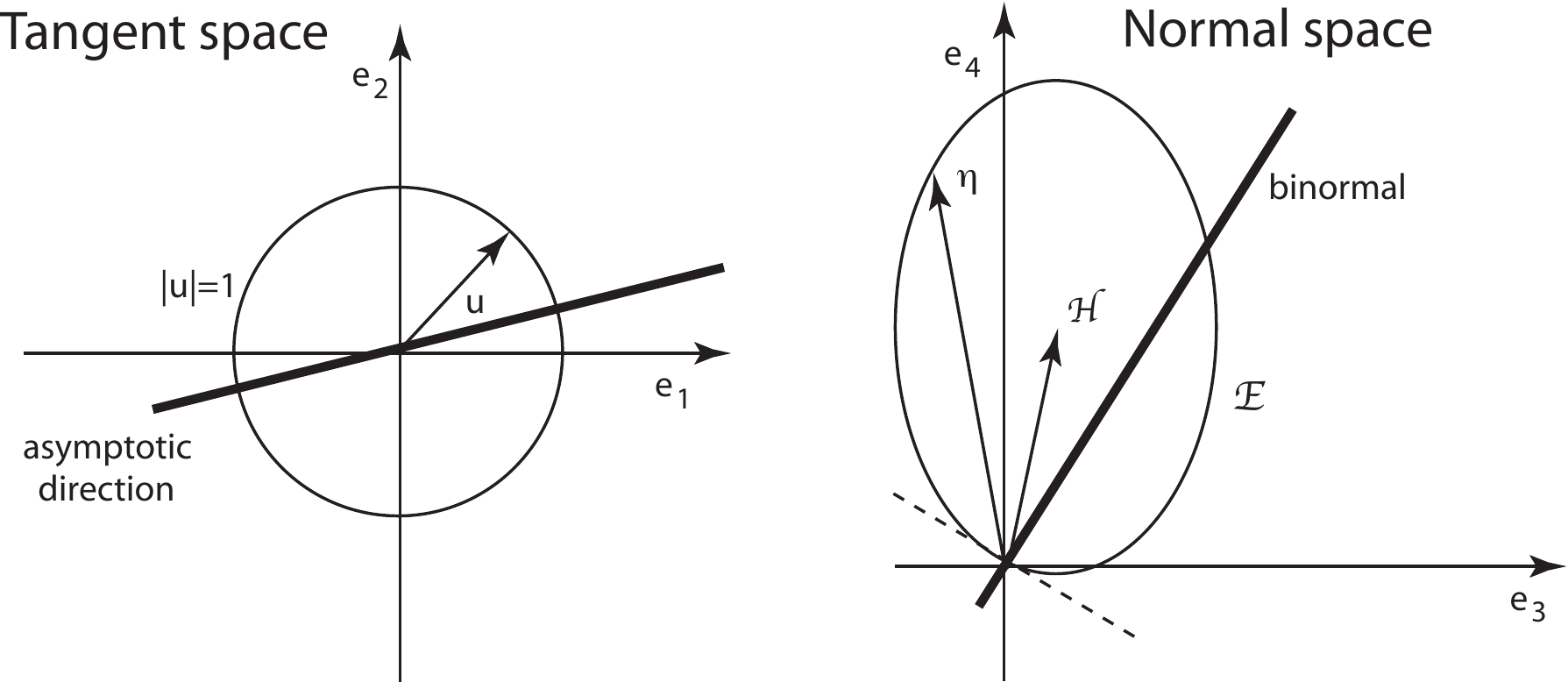}
\end{center}
\caption{Indicatrix at a parabolic point: 1 binormal}\label{indxp}
\end{figure}

\end{itemize}

The points $p$ where the curvature ellipse passes through the origin are characterised by $\Delta(p)=0$, where:
\beq
\Delta=\dfrac{1}{4}\left|
\begin{matrix}
a&2b&c&0\\
e&2f&g&0\\
0&a&2b&c\\
0&e&2f&g
\end{matrix}
\right|
\eeq
In fact, $\Delta$ is the resultant of the two polynomials $ax^2+2bxy+cy^2$ and $ex^2+2fxy+gy^2$. If $\eta(\theta)=0$ those polynomials have a common root $(\cos\theta,\sin\theta)$, and their resultant has to be zero.

The points of $S$ may be classified~\cite{little} using $\Delta$, as follows:

\begin{prop}
If a pont $p\in S$ is hyperbolic, parabolic or elliptic then  $\Delta(p) < 0$, $\Delta(p)=0$ or $\Delta(p) > 0$, respectively.
\end{prop}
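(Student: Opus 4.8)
The plan is to reduce the whole statement to one $4\times4$ determinant evaluated in the canonical moving frame of Proposition~\ref{canframe}. A preliminary remark, which I would record first, is that $\Delta$ does not depend on the choice of adapted orthonormal frame, so it is enough to evaluate it conveniently: $\Delta$ is, up to the factor $\tfrac14$, the resultant of the binary quadratic forms $\sff_1$ and $\sff_2$ of~\eqref{sff}; a change of tangent frame acts on $(\cos\theta,\sin\theta)$ by an orthogonal substitution, which multiplies this resultant by $(\det)^{4}=1$, while a change of normal frame replaces $(\sff_1,\sff_2)$ by $(\alpha\sff_1+\beta\sff_2,\,\gamma\sff_1+\delta\sff_2)$ with $\alpha\delta-\beta\gamma=\pm1$, which multiplies it by $(\alpha\delta-\beta\gamma)^2=1$.

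Next, assuming $p$ is not umbilic, I would take the canonical frame, so $b=0$, $e=g$, $\alpha:=\tfrac12(a-c)\ge|f|=:\beta\ge0$, and, since $p$ is nondegenerate, $\kappa(p)=(a-c)f\ne0$ forces $\alpha,\beta>0$. In the normal coordinates $(X,Y)$ dual to $(e_3,e_4)$, formula~\eqref{10} exhibits the curvature ellipse as the axis-parallel ellipse $\dfrac{(X-X_0)^2}{\alpha^2}+\dfrac{(Y-Y_0)^2}{\beta^2}=1$ with centre $(X_0,Y_0)=\bigl(\tfrac12(a+c),\,e\bigr)$. The origin then lies inside, on, or outside the ellipse according to the sign of $\alpha^2\beta^2\bigl(\tfrac{X_0^2}{\alpha^2}+\tfrac{Y_0^2}{\beta^2}-1\bigr)$, and using $X_0^2-\alpha^2=\tfrac14\bigl[(a+c)^2-(a-c)^2\bigr]=ac$ this quantity equals $acf^{2}+\tfrac14 e^{2}(a-c)^{2}$.

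Then I would substitute $b=0$, $e=g$ into the determinant defining $\Delta$ and expand along the first column; the short calculation gives $\Delta=-\bigl(acf^{2}+\tfrac14 e^{2}(a-c)^{2}\bigr)$, exactly the negative of the quantity just found. Comparing the two, $p$ is hyperbolic (origin outside the curvature ellipse) precisely when $\Delta(p)<0$, parabolic (origin on it) precisely when $\Delta(p)=0$, and elliptic (origin inside) precisely when $\Delta(p)>0$, which proves the proposition at non-umbilic points. The parabolic case can also be read off the resultant picture directly: $\Delta(p)=0$ means $\sff_1$ and $\sff_2$ share a root, and under $\kappa(p)\ne0$ that root is necessarily real, since otherwise its conjugate would be a second common root, forcing $\sff_1\parallel\sff_2$ and hence $\det\mathcal A=\tfrac12\kappa(p)=0$; a real common root $(\cos\theta,\sin\theta)$ is exactly a $\theta$ with $\eta(\theta)=0$, i.e.\ the origin on the curvature ellipse.

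Finally I would settle the umbilic points: there $\mathcal A$ is a nonzero scalar multiple of an orthogonal matrix, so the curvature ellipse is an honest circle centred at $\mathcal H(p)=0$, the origin is its centre, hence strictly inside, so $p$ is elliptic and one must obtain $\Delta(p)>0$. Rotating the tangent frame to kill $b$ gives $a=-c=:R\ne0$, $e=g=0$, $f=\pm R$, and substituting into the determinant gives $\Delta(p)=R^{4}>0$. The only genuine computation is the first-column expansion of the $4\times4$ determinant; the point I expect to require the most care is reading ``$\sff_1$ and $\sff_2$ have a common root'' over $\mathbf R$ rather than $\mathbf C$ in the parabolic case, which is exactly where the hypothesis $\kappa(p)\ne0$ is used.
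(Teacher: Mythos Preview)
Your argument is correct, but it follows a genuinely different route from the paper's proof. You pass to the canonical frame of Proposition~\ref{canframe}, write the curvature ellipse explicitly as an axis-parallel ellipse, and compare the sign of the ``inside/on/outside'' quantity $acf^{2}+\tfrac14 e^{2}(a-c)^{2}$ with a direct evaluation of the $4\times4$ determinant in that frame; the frame-invariance of $\Delta$ (via the resultant interpretation) and the separate check at umbilic points close the argument. The paper instead works in an arbitrary frame: it introduces the quadratic form $\mathfrak N(x,y)=(af-be)x^{2}+(ag-ce)xy+(bg-cf)y^{2}$ on tangent directions, observes that $\Delta=\det\mathfrak N$, and then shows geometrically that $\mathfrak N(\tau)=0$ is exactly the condition that the image of $\de_N\tau$ is one-dimensional and spans a line through the origin tangent to the indicatrix. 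Thus the number of real roots of $\mathfrak N$ (two, one, or none) matches the number of tangents from the origin, i.e.\ hyperbolic, parabolic, or elliptic.

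What each approach buys: your computation is shorter and entirely elementary once the canonical frame is granted, but it needs the invariance remark and a side case for umbilics. The paper's argument avoids any special frame and, more importantly, identifies the asymptotic directions as the zeros of $\mathfrak N$ along the way, which is used later (e.g.\ in \S\ref{shf}); your proof does not recover this interpretation. Your closing remark on the parabolic case---reading a common root of $\sff_1,\sff_2$ as real because a conjugate pair would force $\sff_1\parallel\sff_2$ and hence $\kappa=0$---is a nice independent check and matches the paper's observation that $\Delta$ is the resultant.
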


\begin{proof}
Let $\tau\in T_pS$ be a tangent vector and $\mathfrak  N$ be  defined by:
\[
\de \tau\cdot e_3\wedge \de \tau\cdot e_4=\mathfrak  N\ \omega_1\wedge \omega_2
\]
As before, the exterior derivative is taken componentwise.

It $\tau =x e_1+ y e_2$, or $\omega_1(\tau)=x$ and $\omega_2(\tau)=y$, then $\mathfrak  N$ is a quadratic form on $(x,y)$ given by:
\[
\mathfrak  N(x,y)=(af-be)x^2+(ag-ce)xy+(bg-cf)y^2
\]

A straightforward computation gives:
\begin{align}\label{Delta}
\Delta=&(ac-b^2)(eg-f^2)-\dfrac{1}{4}(ag+ce-2bf)^2=\\
\notag
=&(af-be)(bg-cf)-\dfrac{1}{4}(ag-ce)^2=
\det \mathfrak  N
\end{align}
and therefore the equation $\mathfrak  N(x,y)=0$ on the direction defined by $(x,y)$ has two solutions, one or no solutions as $\Delta(p) < 0$, $\Delta(p)=0$ or $\Delta(p) > 0$, respectively. 

On the other hand, $\mathfrak  N(x,y)=0$ is equivalent to:
\[
\de \tau\cdot e_3\wedge \de \tau\cdot e_4 \ (u,v)=0,\qquad \forall u,v\in T_pS
\]

If we define:
\[
\de_N\tau=(\de \tau\cdot e_3) e_3+(\de \tau\cdot e_4) e_4
\]
we see that:
\[
\de \tau\cdot e_3\wedge \de \tau\cdot e_4 \ (u,v)=
\left|
\begin{matrix}
\de \tau(u)\cdot e_3&\de \tau(u)\cdot e_4\\
\de \tau(v)\cdot e_3&\de \tau(v)\cdot e_4
\end{matrix}
\right|=\de_N\tau(u)\wedge \de_N\tau(v)
\]
and thus $\mathfrak  N(x,y)=0$ is equivalent to the image of $\de_N\tau$ being one dimensional. When that happens, the image of  $\de_N\tau$ spans  a line tangent to the curvature ellipse and perpendicular to a binormal:

Let:
\[
\tau=\rho u, \quad u=\dfrac{1}{|\tau|}\tau=\cos\theta e_1+\sin\theta e_2, \quad \rho=|\tau|
\]
and $\gamma$ be  the curve passing though $p$, parametrized by arc length from $p$, obtained as the intersection of the surface $S$ with the hyperplane containing the normal space at $p$ and $u$. Then, taking the normal component of  $\de \tau(u)$:
\[
\de_N\tau(u)=\rho\,\sff(u)=\rho\,\eta(\theta)
\]
Taking $v=-\sin\theta e_1+\cos\theta e_2$, and since $\de_N\tau(v)\parallel \de_N\tau(u)$:
\[
\eta'(\theta)=\dfrac{1}{\rho}\de_N\tau(v)\parallel \eta(\theta)
\]
This means that the tangent to the indicatrix at $\eta(\theta)$ passes through the origin, as $\eta'(\theta)\parallel \eta(\theta)$, and therefore the image of $\de_N\tau$ spans a line tangent to the curvature ellipse and perpendicular to a binormal.
\end{proof}

We can  extend the definition of hyperbolic point, respectively elliptic point and parabolic point, to the case where $\kappa(p)=0$ by means of $\Delta$, as $\Delta(p)<0$, respectively $\Delta(p)>0$ and $\Delta(p)=0$.

\begin{defn}
The second-order \emph{osculating space} of the surface $S$ at $p\in S$ is the space generated by all vectors $\gamma'(0)$ and $\gamma''(0)$ where $\gamma$ is a curve through $p$ parametrized by arc length from $p$. An \emph{inflection point} is a point where the dimension of the osculating space is not maximal.
\end{defn}

\begin{theorem}The following conditions are equivalent:
\begin{itemize}
\item $p\in S$ is an inflection point.
\item $p\in S$ is a point of intersection of $\Delta=0$ and $\kappa=0$.
\item ${\rank}\mathcal M(p) \le 1$
\end{itemize}
The inflection points are  singular points of $\Delta=0$.
\end{theorem}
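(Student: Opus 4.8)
The plan is to work in Monge form around $p$, with $p$ the origin and $j^1\Phi(0)=0$, so that $E=G=1$, $F=0$ at the origin and the coefficients $a,b,c,e,f,g$ of the second fundamental form are (up to the common positive factor $1/\sqrt{\hat E}$, which equals $1$ at the origin) the entries of the Hessians of $\varphi$ and $\psi$: $a=\varphi_{xx}$, $b=\varphi_{xy}$, $c=\varphi_{yy}$, $e=\psi_{xx}$, $f=\psi_{xy}$, $g=\psi_{yy}$. The matrices $\mathcal M_1$, $\mathcal M_2$ are then exactly $\He(\varphi)$, $\He(\psi)$ at the origin, and $\mathcal M$ denotes the $2\times 3$ matrix $\begin{bmatrix} a&2b&c\\ e&2f&g\end{bmatrix}$ whose rows are the coefficient vectors of $\sff_1$ and $\sff_2$. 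I will prove the three equivalences cyclically, and then show separately that inflection points are singular points of $\Delta=0$.

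First I would handle $\mathrm{rank}\,\mathcal M(p)\le 1 \Leftrightarrow$ inflection point. The osculating space is spanned by $\gamma'(0)$ (which for curves through $p$ fills out $T_pS$) together with the normal components of the second derivatives, i.e.\ by $T_pS$ together with the span of $\{\sff(u) : u\in T_pS\}$; the latter is the linear span of the curvature ellipse, hence of the image of the map $u\mapsto \sff(u)$. In coordinates $\sff(u) = (\,\mathbf u^{\!\top}\He(\varphi)\mathbf u,\ \mathbf u^{\!\top}\He(\psi)\mathbf u\,)$ as $u$ ranges over $T_pS$; its linear span is $\mathbb R^2$ precisely when the two quadratic forms $\sff_1,\sff_2$ are linearly independent, which is exactly $\mathrm{rank}\,\mathcal M(p)=2$. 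Thus the osculating space has the generic dimension $2+2=4$ iff $\mathrm{rank}\,\mathcal M(p)=2$, and $p$ is an inflection point iff $\mathrm{rank}\,\mathcal M(p)\le 1$. (One should note the degenerate sub-case where both Hessians vanish, $\mathrm{rank}\,\mathcal M(p)=0$: the osculating space is then merely $T_pS$; this is still covered by the statement.)

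Next, $\mathrm{rank}\,\mathcal M(p)\le 1 \Leftrightarrow$ ($\Delta(p)=0$ and $\kappa(p)=0$). From \eqref{curvatures}, $\kappa = (a-c)f-(e-g)b$ is (twice) the $2\times 2$ minor of $\mathcal M$ formed by columns $1$ and $3$ minus\,\dots — more cleanly, $\kappa$ is $-\tfrac12$ times... I would instead observe directly that $\kappa=0$, $K_1+K_2=K$ and $\Delta=0$ together are equivalent to rank $\le 1$: by \eqref{Delta}, $\Delta = (ac-b^2)(eg-f^2)-\tfrac14(ag+ce-2bf)^2$, and a short computation identifies $4\Delta$ together with $\kappa$ with the full set of $2\times2$ minors of $\mathcal M$ via the classical identity that the $2\times 2$ minors of $\begin{bmatrix} a&2b&c\\ e&2f&g\end{bmatrix}$ are $2(af-be)$, $ag-ce$, $2(bg-cf)$, whose squares and products control $\Delta$ and whose antisymmetric combination is $\kappa$. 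Concretely: $\mathrm{rank}\,\mathcal M(p)\le 1$ iff all three $2\times2$ minors vanish, i.e.\ $af-be=0$, $ag-ce=0$, $bg-cf=0$; the first and third give $\mathfrak N\equiv 0$ hence $\Delta=\det\mathfrak N=0$, and together they force $\kappa=0$. Conversely $\Delta(p)=0$ with $\kappa(p)=0$ forces the quadratic form $\mathfrak N$ to be a perfect square or zero while simultaneously being "antisymmetric" enough that it must vanish identically, giving rank $\le 1$. The cleanest route is to phrase $\mathfrak N$ as the binary quadratic form whose coefficients are the three minors and note $\kappa$ is the remaining independent bilinear invariant; I would verify this identity by the explicit substitution \eqref{abc}–\eqref{efg} at the origin, or cite \cite{am1,am2}.

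Finally, for "inflection points are singular points of $\Delta=0$": at such a point $\mathrm{rank}\,\mathcal M\le 1$, so all three minors $af-be$, $ag-ce$, $bg-cf$ vanish, i.e.\ $\mathfrak N\equiv 0$. Writing $\Delta = \det\mathfrak N$ where the entries of the symmetric matrix of $\mathfrak N$ are polynomials in the jet of $\Phi$ that all vanish at $p$, the differential $d\Delta$ at $p$ — being a sum of terms each of which is a $2\times2$ sub-determinant of $\mathfrak N$'s matrix multiplied by the differential of an entry, hence each term contains a vanishing factor — is zero. Hence $p$ is a critical point of $\Delta$, and since $\Delta(p)=0$ it is a singular point of the curve $\{\Delta=0\}$. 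The main obstacle I anticipate is purely bookkeeping: pinning down the precise linear-algebra identity relating $4\Delta$, $\kappa$, and the three $2\times2$ minors of $\mathcal M$ (equivalently, showing $\Delta=\det\mathfrak N$ and that $\mathfrak N\equiv 0 \Leftrightarrow \kappa=0,\ \Delta=0$), since $\Delta$ is quartic in the minors while $\kappa$ is linear, so one must rule out the spurious case "$\Delta=0,\ \kappa=0$ but some minor nonzero" — this is where the discriminant sign analysis of the binary form $\mathfrak N$ does the work.
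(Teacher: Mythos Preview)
Your overall strategy coincides with the paper's: identify the three $2\times 2$ minors of $\mathcal M$ with the coefficients of $\mathfrak N$, use $\Delta=\det\mathfrak N$, and read off rank~$\le 1$ from the simultaneous vanishing of the minors. The equivalence of ``inflection point'' with $\rank\mathcal M\le 1$ and the singular-point claim (differentiating $\Delta=\det\mathfrak N$ when all entries vanish) are handled correctly and exactly as in the paper.

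The genuine gap is the one you flag yourself: the implication $(\Delta(p)=0,\ \kappa(p)=0)\Rightarrow \rank\mathcal M(p)\le 1$. This is not mere bookkeeping, and your description (``antisymmetric combination'', ``discriminant sign analysis'') never lands on the identity that makes it work. The point is that $\kappa$ is precisely the \emph{sum} of the outer coefficients of $\mathfrak N$:
\[
\kappa=(a-c)f-(e-g)b=(af-be)+(bg-cf).
\]
Since $\Delta=(af-be)(bg-cf)-\tfrac14(ag-ce)^2$, the hypothesis $\Delta=0$ gives $(af-be)(bg-cf)=\tfrac14(ag-ce)^2\ge 0$. Two real numbers with sum zero and nonnegative product must both vanish, so $af-be=bg-cf=0$; substituting back into $\Delta=0$ then forces $ag-ce=0$ as well, and all three minors of $\mathcal M$ are zero. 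That is the whole argument---no further ``discriminant sign analysis'' of $\mathfrak N$ is needed. (Incidentally, your sentence ``the first and third give $\mathfrak N\equiv 0$'' is a slip: you need all three minors to vanish to kill the $xy$ term.)
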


\begin{proof}
Assume $p\in S$ is a point of intersection, $\Delta(p)=0$ and $\kappa(p)=0$. Since:
\[
\Delta=(af-be)(bg-cf)-\dfrac{1}{4}(ag-ce)^2
\]
it follows from $\Delta =0$ that $(af-be)(bg-cf)\ge 0$; now as
\[
\kappa=(a-c)f-(e-g)b=(af-be)+(bg-cf)
\]
we see that, at $p$:
\[
af-be=0,\quad bg-cf=0,\quad\hbox{and from }\Delta(p)=0, \quad ag-ce=0
\]
This implies ${\rank}\mathcal M(p) \le 1$ and therefore the indicatrix is a radial segment and the point is an inflection point: the osculating space is three dimensional, while it is four dimensional when the curvature ellipse is non degenerate. 
The vanishing of those three expression  also implies that the derivatives of $\Delta$ are both zero at $p$, which is then a singular point.

We can reverse the argument to show that at an inflection point $p$ we must have $\Delta(p)=0$ and $\kappa(p)=0$.

\end{proof}

\begin{prop}
Let $p\in S$ be a generic inflection point. Then $p$ is a Morse singular point of $\Delta=0$,  and the Hessian $H_{\Delta}$ of $\Delta$ at $p$ has the same sign as the curvature $K(p)$.
\end{prop}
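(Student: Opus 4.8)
The plan is to pass to a Monge chart adapted to $p$ and to reduce the Hessian of $\Delta$ to a single scalar invariant. First I would normalise the position: translate $p$ to $0\in\Rq$, rotate $\Rq$ so that $T_pS$ is the $xy$-plane, and — using that $p$ is an inflection point, so the curvature ellipse degenerates to a radial segment — rotate the normal plane so that $e_3$ points along that segment. In the resulting Monge form $\Xi=(x,y,\varphi,\psi)$ this amounts to $j^1\varphi(0)=0$ and $j^2\psi(0)=0$; equivalently the component $\sff_2$ of the second fundamental form vanishes at $p$, so $e(p)=f(p)=g(p)=0$ and $K(p)=a(p)c(p)-b(p)^2$. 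This is no loss of generality: $\Delta$ is the resultant of $\sff_1$ and $\sff_2$, hence invariant up to a positive multiple under rotations (and reflections) of the tangent and normal frames.

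Next I would set $u=af-be$, $v=ag-ce$, $w=bg-cf$, so that $\Delta=uw-\tfrac14 v^2=\det\mathfrak N$. By the preceding Theorem (and its proof) an inflection point is precisely a point with $u=v=w=0$, recovering the already-noted fact that $\de\Delta(p)=0$, and at $p$ the Hessian is the quadratic form $\xi\mapsto 2\,\de u(\xi)\,\de w(\xi)-\tfrac12\,(\de v(\xi))^2$ on $T_pS$, the second derivatives of $u,v,w$ contributing nothing since they enter multiplied by $u,w,v$. A short determinant computation then gives, in the Monge coordinates,
\[
\det \He\Delta(p)=-[\de u,\de w]^2+[\de u,\de v]\,[\de v,\de w],
\]
where $[\alpha,\beta]$ denotes the determinant built from the $\de x$- and $\de y$-components of two $1$-forms. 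Only the sign and the vanishing of this determinant are intrinsic, which is all the statement concerns, so it suffices to evaluate $\de u,\de v,\de w$ at $p$ in this chart.

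Here the hypothesis $e=f=g=0$ at $p$ is decisive: it leaves $\de u(p)=a(p)\,\de f(p)-b(p)\,\de e(p)$ and the analogous expressions for $\de v,\de w$, so only $\de e,\de f,\de g$ at $p$ (together with $a(p),b(p),c(p)$) matter. These I would read off the Monge formulae~(\ref{efg}): at the origin $E=\hat E=W=1$, $F=\hat F=0$, all first derivatives of $E,\hat E,W$ vanish, and $\hat F=\varphi_x\psi_x+\varphi_y\psi_y$, being a sum of products of functions vanishing at $0$, has vanishing differential there — so $e,f,g$ agree to first order at $p$ with $\psi_{xx},\psi_{xy},\psi_{yy}$, and $\de e(p),\de f(p),\de g(p)$ come directly from the cubic part of $\psi$. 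Substituting and organising the answer through the $2\times2$ minors of the coefficient matrix of $j^3\psi(0)$ (equivalently, the Hessian quadratic form of the cubic part of $\psi$), the three determinants all acquire the same factor,
\[
[\de u,\de w]=b(p)\,\Lambda,\qquad [\de u,\de v]=a(p)\,\Lambda,\qquad [\de v,\de w]=c(p)\,\Lambda,
\]
where $\Lambda$ is the apolar invariant of $\sff_1(p)$ and the Hessian form of the cubic part of $\psi$, a polynomial in $j^2\varphi(0)$ and $j^3\psi(0)$. Hence
\[
\det\He\Delta(p)=\bigl(a(p)c(p)-b(p)^2\bigr)\,\Lambda^2=K(p)\,\Lambda^2 .
\]

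Finally, the genericity assumption is exactly $\Lambda\neq0$ and $K(p)\neq0$, each the non-vanishing of a polynomial in the jet of $\Xi$ at $p$, hence met at a generic inflection point. Then $\det\He\Delta(p)\neq0$, so $p$ is a nondegenerate critical point of $\Delta$, i.e. a Morse singular point of $\Delta=0$; and $\operatorname{sign}\det\He\Delta(p)=\operatorname{sign}K(p)$, so $\He\Delta(p)$ is definite when $K(p)>0$ and indefinite when $K(p)<0$ — that is, it has the same sign as $K(p)$, with $\Delta=0$ locally an isolated point, respectively a node, accordingly. The step I expect to be the real work is the reduction in the third paragraph: stripping the metric factors from~(\ref{efg}) to see that $\de e,\de f,\de g$ at $p$ depend only on $j^3\psi(0)$, and then the bilinear-algebra identity that pulls the common factor $\Lambda$ out of the three $2\times2$ determinants; the rest is bookkeeping.
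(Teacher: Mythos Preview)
Your argument is correct and starts from the same coordinate normalisation as the paper: a Monge chart at $p$ with the normal frame rotated so that the degenerate curvature ellipse lies along $e_3$, i.e.\ $e(p)=f(p)=g(p)=0$ and hence $K(p)=a(p)c(p)-b(p)^2$. From that point the two executions diverge. The paper makes a \emph{further} normalisation, bringing the cubic part of $\psi$ to $\tfrac13 x^3+\mu xy^2$ by a change of the $(x,y)$ coordinates (genericity giving $\mu\neq0$), then expands $\Delta$ to second order explicitly and reads off $H_\Delta=16\mu^2(C-\mu A)^2K(0)$, invoking a second genericity condition $C-\mu A\neq0$. You instead keep the cubic general, exploit the structure $\Delta=uw-\tfrac14 v^2$ with $u,v,w$ vanishing at $p$ to obtain the Hessian form $2\,\de u\,\de w-\tfrac12(\de v)^2$, and then factor the three determinants $[\de u,\de v]$, $[\de u,\de w]$, $[\de v,\de w]$ as $a(p)\Lambda$, $b(p)\Lambda$, $c(p)\Lambda$ with the common apolar invariant $\Lambda=a[\de f,\de g]-b[\de e,\de g]+c[\de e,\de f]$, yielding $\det\He\Delta(p)=K(p)\Lambda^2$. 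In the paper's normalisation one checks $\Lambda=4\mu(C-\mu A)$, so the two formulae agree and your single condition $\Lambda\neq0$ packages the paper's $\mu\neq0$ and $C-\mu A\neq0$ together. Your route trades the extra normal-form step for a short bilinear-algebra identity and makes the dependence on $K(p)$ visible without computation; the paper's route is a direct expansion with no identity to verify. Either is perfectly adequate here.
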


\begin{proof}
Since $p$ is generic we can assume that $K(p)\ne 0$. By a linear change of coordinates  and a translation of the origin in $\Rq$, if necessary, we can assume that $p$ is the origin, the tangent plane $T_pS$ is  the $(x,y)$ plane, and  the line passing through the origin containing the curvature ellipse is the third axis.

We consider $S$ around $p$ as the graph of a map $(\varphi,\psi)$ around the origin.in $\mathbf R^2$ Then:
\[
\varphi(0)=\psi(0)=0,\quad \varphi_x(0)=\psi_x(0)=0,\quad \varphi_y(0)=\psi_y(0)=0
\]

The condition of $p$ being an inflection point, and the choice of the third axis mean that, in view of  (\ref{efg}):
\[
\psi_{xx}(0)=e(0)=0, \quad \psi_{xy}(0)=f(0)=0, \quad \psi_{yy}(0)=g(0)=0
\]
and $\psi$ is a homogeneous cubic polynomial plus higher order terms. Note that at the origin $E=G=\hat E=\hat G=1$ and $F=\hat F=0$.

A convenient standard change of coordinates $(x,y)$ and the genericity condition allow us to assume that:
\[
\psi(x,y)=\dfrac{1}{3}x^3+\mu xy^2+O(4), \quad \mu\ne 0
\]

Thus:
\[
e=2x+O(2),\quad f=2\mu y+O(2),\quad g=2\mu x+O(2)
\]
and $a=A+O(1)$, $b=B+O(1)$, $c=C+O(1)$. Then:
\begin{align*}
\Delta =&(ac-b^2)(eg-f^2)-\dfrac{1}{4}(ag+ce-2bf)^2=\\
=&(AC-B)^2)(4\mu x^2-4\mu^2y^2)-\\
&-\dfrac{1}{4}(2A\mu x+2Cx-4B\mu y)^2+O(3)\\
=&-[4\mu B^2+(C-\mu A)^2]x^2+4\mu(C+\mu A)Bxy-4\mu^2ACy^2+O(3)
\end{align*}
and:
\begin{align*}
H_{\Delta} =&
\left |
\begin{matrix}
-8\mu B^2-2(C-\mu A)^2 & 4\mu (C+\mu A)B\\
&\\
4\mu (C+\mu A)B & -8\mu^2AC
\end{matrix}
\right |=\\
=&16 \mu^2(C-\mu A)^2AC+(4\mu AC)(16\mu^2B^2)-16\mu^2(C+\mu A)^2B^2=\\
=&16 \mu^2(C-\mu A)^2[AC-B^2]
\end{align*}

As $K(0)=a(0)c(0)-b(0)^2+e(0)g(0)-f(0)^2=AC-B^2$ we finally obtain:
\[
H_{\Delta} =16 \mu^2(C-\mu A)^2 K(0)
\]
with $C-\mu A\ne 0$  by genericity again.
\end{proof}

When $\Delta(p)=0$ we can distinguish among the following possibilities:

\begin{itemize}
\item $\Delta(p)=0$, $K(p) < 0 $ and ${\rank}\mathcal M(p) = 2$

\noindent
The curvature  ellipse is non-degenerate, $\kappa(p)\ne 0$; the binormal is the normal  at $p$.

\item $\Delta(p) = 0$, $K(p) < 0 $ and ${\rank}\mathcal M(p) = 1$

\noindent
$p$ is an \emph{inflection point
of real type}: the curvature ellipse is a radial segment and $p$ does not belong to it, $\kappa(p)= 0$. The point $p$ is a self-intersection point of $\Delta=0$, as $H_{\Delta}(p)<0$.
\item $\Delta(p) = 0$, $K(p)=0$

\noindent
$p$ is an \emph{inflection point of flat type}: the curvature ellipse is a radial segment and $p$  belongs to its boundary, $\kappa(p)= 0$.
\item $\Delta(p) = 0$, $K(p) > 0$

\noindent
$p$ is an \emph{inflection point of imaginary type}: the curvature ellipse is a radial segment and $p$ belongs to its interior, $\kappa(p)= 0$. The point $p$ is an isolated point of $\Delta=0$, as $H_{\Delta}(p)>0$.

\end{itemize}

\begin{figure}[thhh]
\begin{center}
\includegraphics[width= \linewidth]{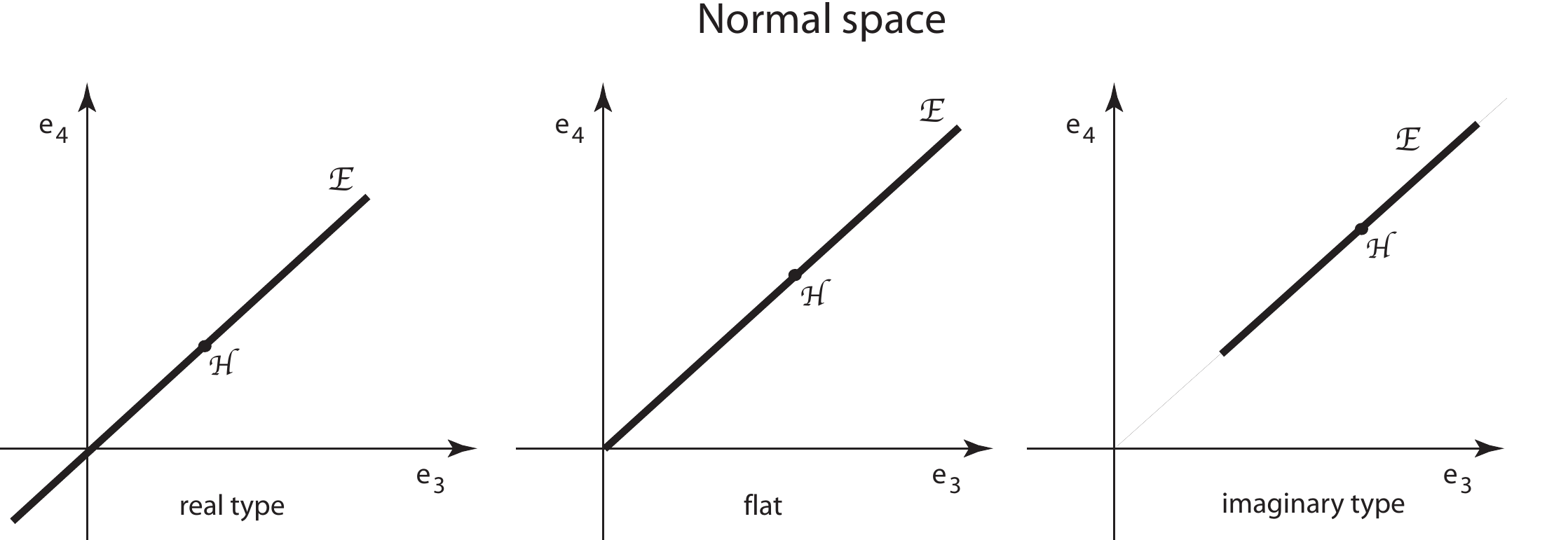}
\end{center}
\caption{Inflection points}\label{inflex}
\end{figure}

At an inflection point the normal to the line through the origin containing the radial segment defines the binormal.

\begin{rem}
If $\Delta(p) = 0$ and $K(p)\ge 0$ then ${\rank}\mathcal M(p)\le 1$: if $K(p)\ge 0$ then $K_1(p)$ and $K_2(p)$ cannot be both positive, therefore $\sff_1$ or $\sff_2$ has a double real root or no real roots; $\Delta(p) = 0$ forces all roots to be the same, or all non real, and $\sff_1$ and $\sff_2$ to be multiples (all roots are common), thus ${\rank}\mathcal M(p)\le 1$.
\end{rem}

\begin{rem}\label{flat}
It can be shown that for an open and dense set of embeddings of $S$ in $\mathbf R^4$, $\Delta^{-1}(0) \cup
K^{-1}(0) = \emptyset$; therefore on a generic surface there are no inflection points of flat type.
\end{rem}

\begin{figure}[hhhh]
\begin{center}
\includegraphics[width=0.9 \linewidth]{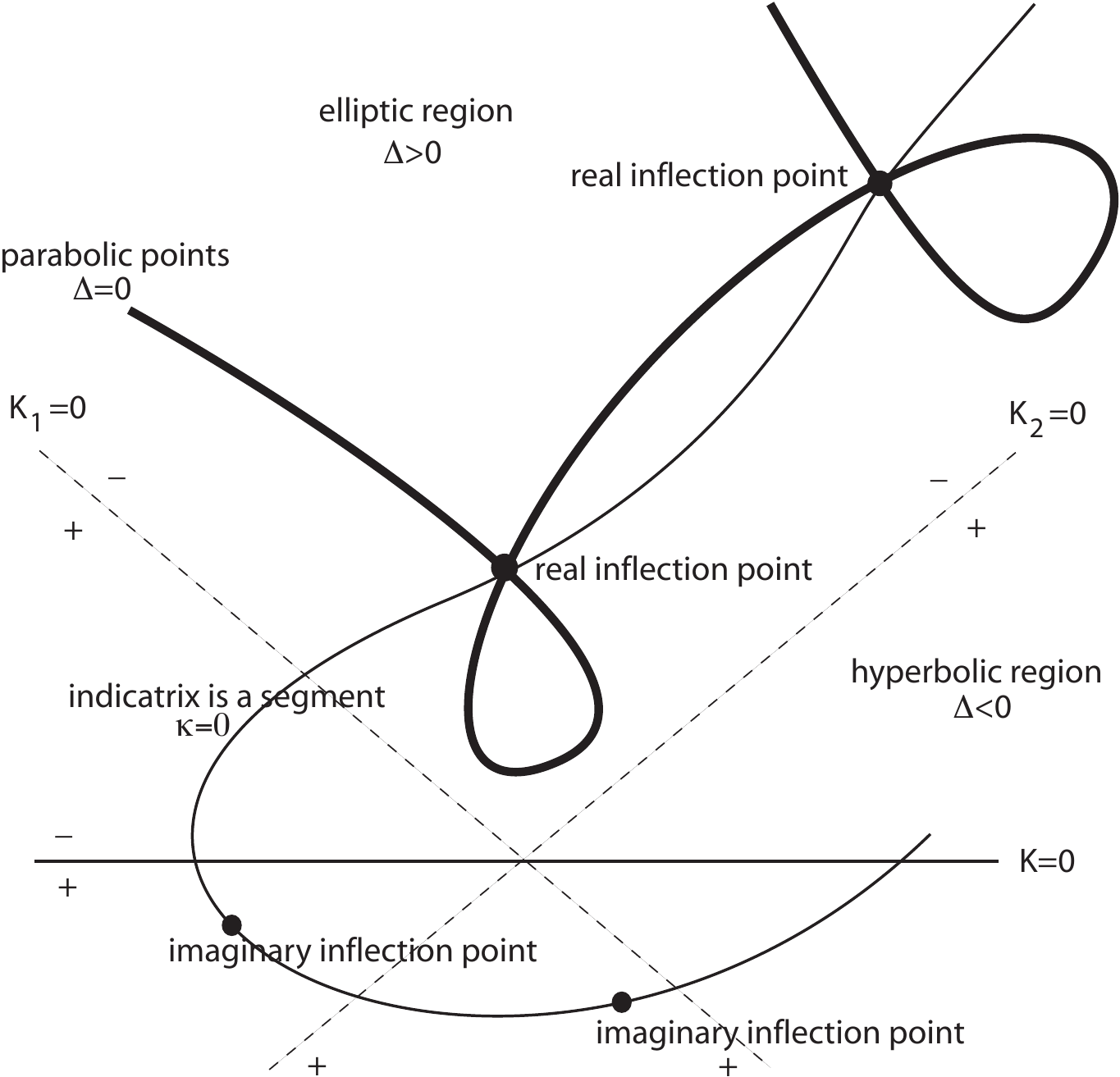}
\end{center}
\caption{Generic inflection points}\label{gendiagram}
\end{figure}

\newpage
\section{The characteristic curve}

Let $\gamma$ be a curve passing though $p$,  and   $u$ be the  tangent vector defined along $\gamma$ by:
\[
u(s)=\dfrac{d}{ds}\gamma(s)
\]
Consider a vector field $w$ along the curve $\gamma$; taking its derivative with respect to $s$, the resulting vector is not necessarily tangent to the surface. The tangent component of that derivative is the \emph{covariant derivative} of $w$ along $u$, denoted $\nabla_u w$.

Assume $\gamma$  to be a curve passing though $p$, parametrized by arc length from $p$, then   $u$, constructed as above, will be a unit tangent vector.
We define another unit  tangent vector field  $v$ along $\gamma$ so that $u$ and $v$ form an  orthonormal basis of the tangent space with the positive orientation.

\begin{prop}
Let $u$ and $v$ be  orthogonal unit tangent vectors along  a curve $\gamma$ parametrized by arc length from $p$, so that $\{u,v\}$ form a basis of the tangent space with the positive orientation. Then:
\[
\dfrac{d}{ds}u=\eta +\alpha v, \quad \dfrac{d}{ds}v=\zeta -\alpha u
\]
where the normal components are $\eta= \sff(u)$ and $\zeta$, and the tangent components are:
\[
\nabla_uu=\alpha v, \quad \nabla_uv=-\alpha u
\]
Moreover $\xi=\eta-\mathcal H$ and $\zeta$ are conjugate radii of the curvature ellipse.
\end{prop}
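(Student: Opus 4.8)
The plan is to compute the derivatives $\frac{d}{ds}u$ and $\frac{d}{ds}v$ directly in the adapted moving frame, separate each into its tangent and normal components, and then identify the normal components with the required points on the curvature ellipse. First I would write $u = \cos\theta\, e_1 + \sin\theta\, e_2$ and $v = -\sin\theta\, e_1 + \cos\theta\, e_2$ for some function $\theta = \theta(s)$ along $\gamma$, and differentiate, using the structure equations and the Cartan relations \eqref{ae} for $\omega_{ij}$ pulled back along $\gamma$. Differentiating $u$ gives a term coming from $\theta'$ (which is tangent, proportional to $v$) and terms $\cos\theta\,\de e_1 + \sin\theta\,\de e_2$; the tangent part of the latter is governed by the connection form $\omega_{12}$, while the normal part is exactly $(\de\Xi\cdot e_1)$-type contributions assembled into $\sff(u)$. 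Collecting, one gets $\frac{d}{ds}u = \sff(u) + \alpha v$ with $\alpha = \theta' + \omega_{12}(u)$ (the geodesic curvature term), and by definition the tangent component of $\frac{d}{ds}u$ is $\nabla_u u = \alpha v$. Since $u\cdot v = 0$ and $|v|=1$, differentiating forces the tangent component of $\frac{d}{ds}v$ to be $-\alpha u$, and its normal component is some normal vector which we name $\zeta$; this gives the first displayed formulas.

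Next I would show that $\xi = \eta - \mathcal H$ and $\zeta$ are conjugate radii of the ellipse. Recall from the proof of the Moore--Wilson theorem that $\eta(\theta) = \mathcal H + \mathcal A w$ with $w = (\cos 2\theta, \sin 2\theta)$, so $\xi = \eta - \mathcal H = \mathcal A w$. The key observation is that $\zeta$ is (up to the tangent reshuffling) the normal part of $\frac{d}{ds}v = \frac{d}{ds}\sff(-\sin\theta\, e_1 + \cos\theta\, e_2)$ evaluated against the frame; concretely the normal component of $\frac{d}{ds}v$ works out to $\mathcal H' $-free part $= -\mathcal A w^{\perp}$ or $\mathcal A\,(-\sin 2\theta, \cos 2\theta)$ type expression — i.e. $\zeta - \mathcal H$ corresponds to the vector $\mathcal A$ applied to the parameter $w$ rotated by $90^\circ$. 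Since two radii of an ellipse $\mathcal A(\cos t,\sin t)$ and $\mathcal A(\cos(t+\tfrac\pi2),\sin(t+\tfrac\pi2))$ are by definition conjugate (they are images under the linear map $\mathcal A$ of orthogonal radii of the unit circle), this identifies $\xi$ and $\zeta - \mathcal H$ — hence $\xi$ and $\zeta$, since conjugacy of radii is about the centred ellipse — as conjugate radii. I would also double-check the half-angle bookkeeping: the indicatrix is traced as $\theta$ runs over $[0,\pi)$ while $w$ runs once around the circle, so a $90^\circ$ rotation in the $w$-parameter corresponds to advancing $\theta$ by $\pi/4$, which is precisely $u \mapsto v$ being a $\pi/2$ rotation in the tangent plane producing a $\pi/2$ rotation in the $w$-parameter — consistent.

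The main obstacle I anticipate is the careful separation of $\zeta$ from spurious tangent/mean-curvature contributions: when differentiating $v$ the derivative $\frac{d}{ds}\sff(v)$ is not simply $\sff$ of a rotated vector, because $\sff$ is a fixed bilinear form but the frame $e_3, e_4$ also rotates (via $\omega_{34}$), and $\mathcal H$ itself varies along $\gamma$. One must verify that all the $\mathcal H'$ and $\omega_{34}$-terms either cancel or are absorbed correctly so that $\zeta - \mathcal H$ is genuinely the $\mathcal A$-image of the rotated parameter and not contaminated by a translation. I would handle this by working at the point $p$ itself (where one is free to choose the frame so that $\omega_{34}(p)$ and the relevant connection terms vanish, or by noting the statement is pointwise), reducing the identity to the purely algebraic fact about $\mathcal A$ and conjugate radii; the global/derivative terms then drop out and only the linear-algebra content remains.
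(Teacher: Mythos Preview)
Your overall approach matches the paper's: write $u=\cos\theta\,e_1+\sin\theta\,e_2$, $v=-\sin\theta\,e_1+\cos\theta\,e_2$, differentiate using the connection forms, and read off the tangent and normal components. The first part, giving $\frac{d}{ds}u=\sff(u)+\alpha v$ with $\alpha=\dot\theta+\omega_{12}(u)$ and $\nabla_u v=-\alpha u$, is correct and essentially identical to the paper.

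The conjugate-radii part, however, contains a real confusion. You write that ``$\zeta-\mathcal H$ corresponds to $\mathcal A$ applied to the rotated parameter'' and worry about $\mathcal H'$ and $\omega_{34}$ contamination. This is not right: $\zeta$ itself equals $\mathcal A(-\sin 2\theta,\cos 2\theta)$, with no $\mathcal H$ term to subtract. The reason is that the normal component of $\frac{d}{ds}v$ is the \emph{bilinear} second fundamental form $\sff(u,v)$ (since $u=\dot\gamma$), not $\sff(v)$. The paper just computes $-\sin\theta(\omega_{13}e_3+\omega_{14}e_4)+\cos\theta(\omega_{23}e_3+\omega_{24}e_4)$ evaluated on $u$, and the trigonometric identities give $\zeta=\mathcal A(-\sin 2\theta,\cos 2\theta)$ directly. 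No derivatives of $\mathcal H$ or $\omega_{34}$ ever enter: the computation at $p$ uses only the values $\omega_{ij}(u)$ at $p$, which are purely algebraic in $a,b,c,e,f,g$.

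Your half-angle check is also off: $u\mapsto v$ is $\theta\mapsto\theta+\pi/2$, hence $2\theta\mapsto 2\theta+\pi$, so $\sff(v)-\mathcal H=-\xi$, not the conjugate radius. The conjugate radius corresponds to a $\pi/2$ shift in $2\theta$, i.e.\ a $\pi/4$ shift in $\theta$, and arises not from evaluating $\sff$ on $v$ but from the mixed term $\sff(u,v)$. Once you compute $\zeta$ directly as the paper does, the conjugacy is immediate: $\xi=\mathcal A(\cos 2\theta,\sin 2\theta)$ and $\zeta=\mathcal A(-\sin 2\theta,\cos 2\theta)$ are images of orthogonal unit vectors under $\mathcal A$.
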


\begin{figure}[thhh]
\begin{center}
\includegraphics[width=\linewidth]{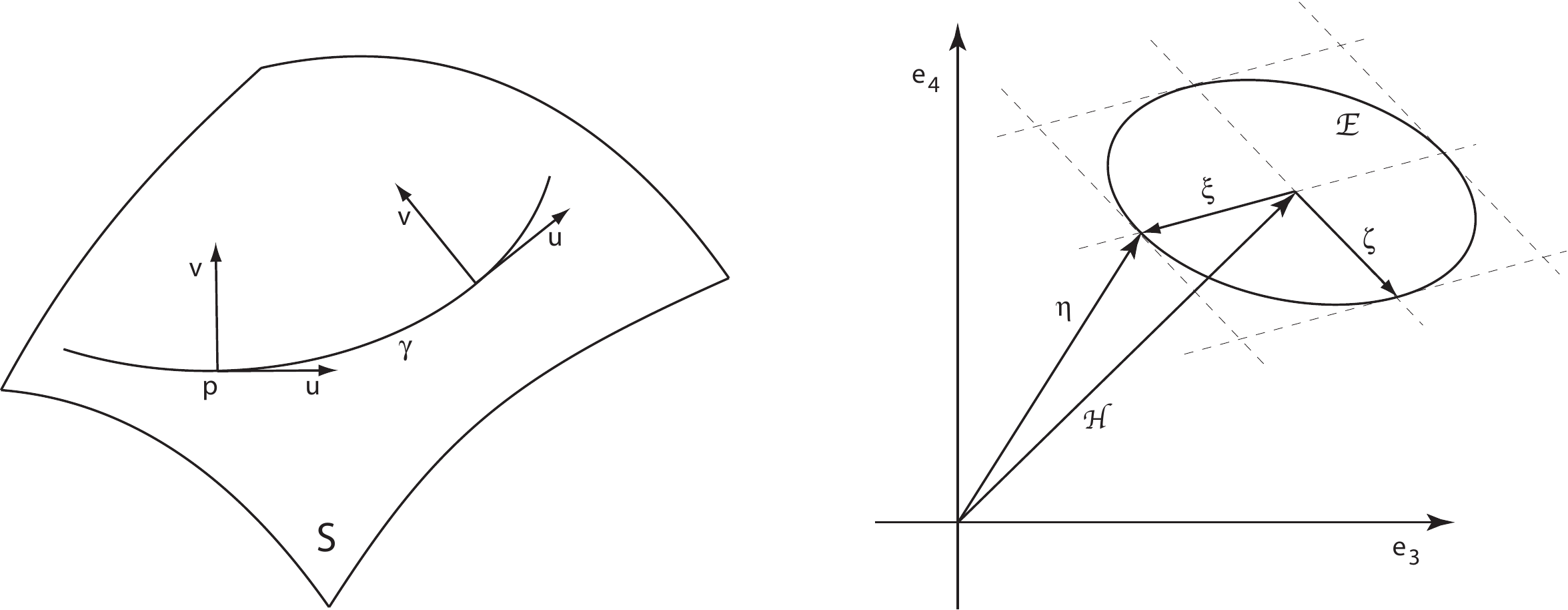}
\end{center}
\caption{$\xi$ and $\zeta$ are conjugate radii}\label{indx}
\end{figure}

\begin{rem}
The relations  concerning the tangent components of the above derivatives also follow by taking the covariant derivative $\nabla_u$ of $u\cdot u=1$, $v\cdot v=1$ and $u\cdot v=0$.
\end{rem}

\begin{rem} If $\gamma$ is a geodesic then $\nabla_uu \equiv 0$ and therefore $\alpha\equiv 0$; also if  $\gamma$ is in the normal section at $p$, the intersection of the surface with the hyperplane containing the normal space at $p$ and $u(0)$, then $\alpha(0)=0$.
\end{rem}

\begin{proof}
Take:
\[
u=\cos \theta(s)e_1+\sin \theta(s)e_2, \quad v=- \sin\theta(s)e_1+ \cos\theta(s)e_2
\]
Then:
\begin{align*}
\dfrac{d}{ds}u &=\cos\theta  (\omega _{12}e_2+\omega _{13}e_3+\omega _{14}e_4) +\sin\theta  (\omega _{21}e_1+\omega _{23}e_3+\omega _{24}e_4)+\\ 
 &\phantom{salto} +\dot\theta (- \sin\theta e_1+ \cos\theta e_2)=\\
&= \eta+(\omega _{12}+\dot\theta)v,  \qquad \hbox{ \ where \ } \eta=\sff(u)
\end{align*}

Similarly:
\begin{align*}
\dfrac{d}{ds}v &=-\sin\theta  (\omega _{12}e_2+\omega _{13}e_3+\omega _{14}e_4) +\cos\theta  (\omega _{21}e_1+\omega _{23}e_3+\omega _{24}e_4)+\\ 
 &\phantom{salto} +\dot\theta (- cos\theta e_1- \sin\theta e_2)=\\
&=-\sin\theta  (\omega _{13}e_3+\omega _{14}e_4)+\cos\theta  (\omega _{23}e_3+\omega _{24}e_4) -(\omega _{12}+\dot\theta)u=\\
&=[(c-a)\sin\theta\cos\theta+b\cos 2 \theta]e_3+\\
&\phantom{salto} +[(g-e)\sin\theta\cos\theta+f\cos2\theta] e_4-\alpha u
\end{align*}
where $ \alpha=\omega _{12}+\dot\theta$.

Now:
\begin{align*}
\zeta &=[(c-a)\sin\theta\cos\theta+b\cos 2 \theta]e_3+[(g-e)\sin\theta\cos\theta+f\cos2\theta] e_4=\\ 
 &=  \mathcal A (-\sin 2\theta , \cos 2\theta)
\end{align*}

Since:
\[
\xi=\eta-\mathcal H=\mathcal A (\cos 2\theta, \sin 2\theta )
\]
we see that  $\xi$ and $\zeta$ are conjugate radii of the  ellipse $\mathcal A(S^1)$, being the images of two perpendicular radii. Considered as applied at the end point of $\mathcal H$, they are conjugate radii of the curvature ellipse.
\end{proof}

We will use the bivector given by the wedge product $v_1\wedge v_2$ to denote the two plane $P$ defined by 
the oriented pair of linearly independent vectors $v_1$ and $v_2$. The bivector $v_1\wedge v_2$ represents an oriented area, that of the oriented parallelogram defined by the vectors $v_1$ and $v_2$; if we choose different independent vectors $v_1'$ and $v_2'$ in the same plane, and with the same orientaten, their wedge product is $v_1'\wedge v_2'=\lambda(v_1\wedge v_2)$ with $\lambda>0$. Thus the (oriented) line spanned by $v_1\wedge v_2$ charcterizes the (oriented)  plane defined by the vectors $v_1$ and $v_2$.

The inner product of a vector $u$ and a bivector $P=v_1\wedge v_2$ is defined as:
\[
u\bullet P=(u\cdot v_1)v_2-(u\cdot v_2)v_1
\]
Therefore $u\bullet P$ is a vector in $P$ orthogonal to the projection $\pi_P(u)$ of $u$ on the plane $P$, and $\{\pi_P(u), u\bullet P \}$ has the same orientation as $\{v_1, v_2 \}$; also $u\bullet P=0$ is equivalent to $u\perp P$.

\begin{lemma}
Let $N_s$ be the family of normal spaces along $\gamma$; the evolvent  of that family at $s=0$ is given by $n\in N_pS$ such that:
\[
n\cdot \xi=1,\quad n\cdot \zeta=0
\]
where $\xi=\sff(u) $ and $\zeta$ is the conjugate radius of $\eta=\xi-\mathcal H$
\end{lemma}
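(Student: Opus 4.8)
The plan is to compute the envelope (the author writes "evolvent") of the one-parameter family of affine normal spaces $N_s$ directly. The family of normal planes $N_s$ are affine 2-planes in $\Rq$ passing through $\gamma(s)$ and spanned by a frame $\{e_3(s),e_4(s)\}$. A point lies on the envelope at $s=0$ when it lies on $N_0$ and the infinitesimal variation condition is satisfied; concretely, if we write a point of $N_s$ as $\gamma(s)+n$ with $n\in N_{\gamma(s)}S$, the envelope condition at $s=0$ is that the $s$-derivative of this point, for the appropriate choice of $n(s)$, stays in $N_0$ — equivalently, the tangential component of $\frac{d}{ds}\big(\gamma(s)+n(s)\big)$ must vanish. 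I would set $n(0)=n$ and take $n(s)$ to be parallel-transported (or any extension; the tangential part of its derivative depends only on $n$ up to terms that cancel), so the relevant quantity is the tangential part of $u(0)+\frac{d}{ds}n(s)\big|_0$.

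**Next**, I would expand using the structure equations and the Proposition on conjugate radii just proved. Write $u=u(0)$, and recall $\frac{d}{ds}u=\eta+\alpha v$ with $\eta=\sff(u)$, and $\frac{d}{ds}v=\zeta-\alpha u$ where $\zeta$ is the conjugate radius to $\xi=\eta-\mathcal H$. For $n\in N_pS$, writing $n=n_3e_3+n_4e_4$, the tangential component of $\frac{d}{ds}n$ is obtained from $\omega_{3j},\omega_{4j}$ via \eqref{ae}: specifically $(\de n\cdot e_1)e_1+(\de n\cdot e_2)e_2$, and $\de e_3\cdot e_1=-\omega_{13}$, etc. A short computation should show that the tangential component of $\frac{d}{ds}n$ is, up to sign, $-\big((n\cdot\sff(u))\,u + (n\cdot\zeta)\,v\big)$ or something of that shape — the key point being that the coefficients of $e_1$ and $e_2$ that appear are exactly $n\cdot\sff(u)$ and $n\cdot\zeta$ because $\sff(u)$ and $\zeta$ are built from the same $\mathcal A$-matrix entries as the $\omega_{ij}$. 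Imposing that $u + (\text{tangential part of }\tfrac{d}{ds}n) = 0$ then forces the coefficient of $u$ to vanish after subtracting $1$ and the coefficient of $v$ to vanish, i.e. $n\cdot\sff(u)=1$ and $n\cdot\zeta=0$, which is the claim (noting $\xi=\sff(u)$ in the lemma's notation, consistent with $\eta=\xi-\mathcal H$... here I would double-check the paper's slightly clashing use of $\xi$ vs. $\eta$ and state it as written).

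**Alternatively**, and perhaps more cleanly, I would use the bivector formalism introduced just before the lemma: represent $N_s$ by its unit bivector $P(s)=e_3(s)\wedge e_4(s)$ together with the base point $\gamma(s)$, and characterize the envelope by the condition that $n\bullet P'(0)$ and the velocity $u$ interact so that $n$ stays in all nearby planes to first order. The derivative $P'(0)=e_3'\wedge e_4+e_3\wedge e_4'$ lies (after projecting out the $P$-component, which only rescales) in $T_pS\otimes N_pS$, and pairing with $n$ produces a tangent vector whose vanishing against the appropriate directions gives the two scalar equations. This route makes the "conjugate radii" appearance transparent, since $n\bullet(e_3\wedge e_4')$ unpacks directly into $n\cdot\zeta$ after identifying $\zeta=\mathcal A(-\sin2\theta,\cos2\theta)$ from the preceding proposition.

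**The main obstacle** is bookkeeping the tangential projection of $\frac{d}{ds}n$ correctly and matching signs and the factor normalizations so that the two conditions come out exactly as $n\cdot\xi=1$, $n\cdot\zeta=0$ rather than with spurious constants; in particular one must be careful that the "$1$" on the right-hand side is precisely the reciprocal arc-length normalization $|u|=1$ and is not absorbed or doubled by the $\cos2\theta$ parametrization of the ellipse (which double-covers). The geometric content — that the envelope point is where the normal line to the indicatrix in the $\xi$-direction meets the affine condition dual to $\zeta$ — is clear; the work is purely in verifying the identification of coefficients through \eqref{ae} and the preceding Proposition.
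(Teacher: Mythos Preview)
Your approach is correct, but it is organized differently from the paper's. The paper represents $N_s$ by the \emph{tangent} bivector: it writes $N_s$ as $\{w:(w-\gamma(s))\bullet u(s)\wedge v(s)=0\}$ and differentiates this equation in $s$. The point is that the derivative then involves $\dfrac{d}{ds}u$ and $\dfrac{d}{ds}v$, whose normal parts are precisely $\eta=\sff(u)$ and $\zeta$ by the Proposition on conjugate radii proved immediately before; one line of bivector algebra gives $-v+(n\cdot\sff(u))v-(n\cdot\zeta)u=0$ and the two scalar conditions drop out. Your first plan instead parametrizes $N_s$ via the \emph{normal} frame and computes the tangential component of $\dfrac{d}{ds}n$ from the connection forms $\omega_{3j},\omega_{4j}$. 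This works (and your claim that the tangential part equals $-(n\cdot\sff(u))\,u-(n\cdot\zeta)\,v$ is correct once you pass from the $\{e_1,e_2\}$ basis to the $\{u,v\}$ basis), but it rederives the content of the conjugate-radii Proposition through the back door rather than invoking it. Your bivector alternative with $P=e_3\wedge e_4$ is the dual of the paper's choice and again forces you to differentiate $e_3,e_4$ rather than $u,v$. In short: both routes are sound; the paper's is shorter because it is set up to feed directly off the preceding Proposition, while yours is self-contained via \eqref{ae} at the cost of one extra change of basis. Your caution about the $\xi$/$\eta$ swap in the lemma statement is justified; the paper's own proof simply writes the first condition as $n\cdot\sff(u)=1$, sidestepping the notational inconsistency.
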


\begin{proof}
The equation for $N_s$ is:
\[
(w-\gamma(s))\bullet u(\gamma(s))\wedge v(\gamma(s))\equiv 0
\]
and therefore the evolvent at $s=0$ is defined by:
\[
n\bullet u\wedge v= 0, \quad \dfrac{d}{ds}\left[(w-\gamma(s))\bullet u(\gamma(s))\wedge v(\gamma(s))\right]_{s=0}=0
\]
where $n=w-p$.
Now:
\begin{align*}
 \dfrac{d}{ds}[(w-\gamma(s)) & \bullet u(\gamma(s))\wedge v(\gamma(s))]= \\
 &=\dfrac{d}{ds}(w-\gamma(s))\bullet (u(\gamma(s))\wedge v(\gamma(s)))+\\
 &+(w-\gamma(s))\bullet 
 \dfrac{d}{ds} u(\gamma(s))\wedge v(\gamma(s)) +\\
 &+(w-\gamma(s))\bullet u(\gamma(s))\wedge \dfrac{d}{ds}v(\gamma(s))
\end{align*}

Since:
\[
\dfrac{d}{ds}\gamma(0)=u
\]
we have:
\[
\left[\dfrac{d}{ds}(w-\gamma(s))\bullet u(\gamma(s))\wedge v(\gamma(s))\right]_{s=0}=
-u\bullet u\wedge v=-v
\]

Also:
\[
 \dfrac{d}{ds} u(\gamma(0))=\sff(u), \quad  \dfrac{d}{ds} v(\gamma(0))=\zeta
\]
and thus:
\[
\dfrac{d}{ds}\left[(w-\gamma(s))\bullet u(\gamma(s))\wedge v(\gamma(s))\right]_{s=0}=
-v+n\bullet \sff(u)\wedge v+n\bullet u\wedge \zeta
\]

The condition:
\[
-v+n\bullet \sff(u)\wedge v+n\bullet u\wedge \zeta=-v+\left(n\cdot \sff(u)\right)v-\left(n\cdot \zeta\right)u=0
\]
is equivalent to:
\[
n\cdot \sff(u)=1, \quad n\cdot \zeta=0
\]
\end{proof}

The normal vector $n$ is the  {\it intersection of consecutive normal planes} along the direction $u$: let $\bar n$ be the normal vector such that $\bar n\perp T_pS$ and $\bar n\perp T_{\gamma(\bar s)}S$, or equivalently $\bar n\in N_p\cap N_{\gamma(\bar s)}S$; then $n=\lim_{\bar s\rightarrow 0}\bar n$.

\begin{defn}
The \emph{characteristic curve} $\mathcal C$ is the curve on the normal space $N_pS$ described by the normal vector $n$, the intersection of consecutive normal planes, when $u$ describes the unit circle in $T_pS$.
\end{defn}

The characteristic curve can be obtained from the indicatrix through a standard transformation in projective geometry:

\begin{defn}
The \emph{pole }of a line $l$ with respect to a conic $C$ is the intersection of the tangents to $C$ at the points of intersection of $l$ with $C$; the \emph{polar} of a point $P$ with respect to a conic $C$ is the line defined by the tangency points of the two tangents to $C$ passing through $P$. The \emph{polar conjugate} of a conic $C'$ with respect to a conic $C$ is the locus of the poles of the tangents to $C'$.
\end{defn}

In particular, the pole of a tangent to $C$ with respect to $C$ is the tangency point, and the polar of that tangency point is the tangent.

\begin{rem}
In the real case, if the point $P$ is inside an ellipse $C$ there are no (real) tangents to $C$ passing through $P$, as there can be no intersection of a line $l$ with the ellipse; using the general fact that the poles of lines all intersecting at $P$ are points in the polar line of $P$, as the polars of the points in a line $l$ are lines all intersecting at the pole of $l$, it is possible to give a geometric construction even for these cases (Fig.~\ref{pol}).
\end{rem}

\begin{figure}[hhhh]
\begin{center}
\includegraphics[width=0.5 \linewidth]{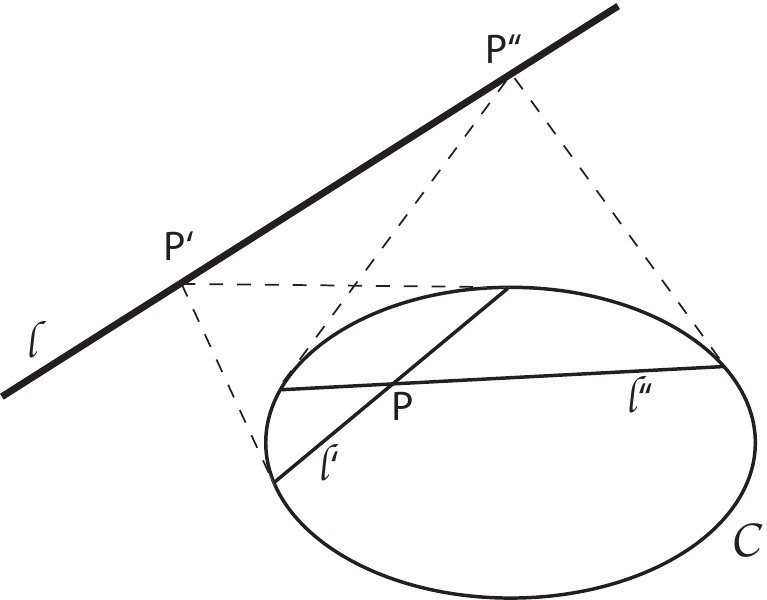}
\end{center}
\caption{$P$ is the pole of $l$; $l'$ and $l''$ are the polars of $P'$ and $P''$}\label{pol}
\end{figure}

\begin{prop}
The characteristic curve is the evolvent of the polars of the points in the indicatrix.
\end{prop}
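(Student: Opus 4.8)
The plan is to compare two constructions that both produce, for each direction $u$ in the unit circle of $T_pS$, a single normal vector: on one side the characteristic curve point $n(u)$, characterised in the preceding lemma by the linear equations $n\cdot\xi=1$, $n\cdot\zeta=0$ (where $\xi=\sff(u)=\eta(\theta)-\mathcal H+\mathcal H$ is the indicatrix point and $\zeta$ the conjugate radius of $\xi-\mathcal H$); on the other side the evolvent of the one-parameter family of polar lines $\{\,\ell(\theta)\,\}$ of the indicatrix points $\eta(\theta)$ with respect to the indicatrix $\mathcal E$ itself. So I must (i) write down the polar line of the point $\eta(\theta)\in\mathcal E$ with respect to $\mathcal E$, (ii) compute its envelope/evolvent as $\theta$ varies, and (iii) identify the result with the system $n\cdot\sff(u)=1$, $n\cdot\zeta=0$.

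For step (i), I would work in the affine plane $N_pS$ with the indicatrix written, as in the proof of the Moore--Wilson theorem, as $\eta(\theta)=\mathcal H+\mathcal A w$, $w=(\cos 2\theta,\sin 2\theta)$, so that $\mathcal E$ is the image of the unit circle under the affine map $w\mapsto\mathcal H+\mathcal A w$. In the $w$-coordinates the conic is just $|w|^2=1$, the point $\eta(\theta)$ corresponds to $w_0=(\cos2\theta,\sin2\theta)$, and its polar is the tangent line $\langle w,w_0\rangle=1$, i.e. $w\cos2\theta+w'\sin2\theta=1$. Pulling this back to $N_pS$ via $w=\mathcal A^{-1}(x-\mathcal H)$ turns the polar into an affine line $L(\theta)$ in the normal plane whose coefficients depend on $\theta$ only through $(\cos2\theta,\sin2\theta)$; concretely $L(\theta)=\{x:\ \langle \mathcal A^{-T}(\cos2\theta,\sin2\theta),\,x-\mathcal H\rangle=1\}$, and $\mathcal A^{-T}(\cos2\theta,\sin2\theta)$ is, up to the metric identification, exactly the conjugate-radius direction $\zeta(\theta)$ appearing in Proposition on conjugate radii, because $\zeta=\mathcal A(-\sin2\theta,\cos2\theta)$ is the $\mathcal A$-image of the radius perpendicular to $w_0$, and the $\mathcal A^{-T}$-image of $w_0$ is the direction orthogonal to that. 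This is the computation I expect to be the crux: matching the linear-algebraic data of the polar map with the explicit conjugate radius $\zeta$, being careful that ``polar with respect to the conic $\mathcal E$'' means the conic centred at $\mathcal H$, not at the origin of $N_pS$.

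For steps (ii)--(iii), the evolvent of a one-parameter family of lines $L(\theta)=\{x:\ \phi(\theta)\cdot(x-\mathcal H)=1\}$ is the locus satisfying both $\phi(\theta)\cdot(x-\mathcal H)=1$ and $\phi'(\theta)\cdot(x-\mathcal H)=0$; differentiating, $\phi(\theta)=\mathcal A^{-T} w_0(\theta)$ has $\phi'(\theta)$ proportional to $\mathcal A^{-T} w_0'(\theta)$, which is proportional to the direction conjugate to the original, i.e. to $\sff(u)-\mathcal H$ up to the metric. Reading off the two conditions in the orthonormal coordinates of $N_pS$ and comparing with the lemma, the first becomes $n\cdot\zeta$-type $=$ const and the second $n\cdot(\text{indicatrix radius})=0$, which after the bookkeeping is precisely $n\cdot\sff(u)=1$, $n\cdot\zeta=0$ — the defining system of the characteristic curve. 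The main obstacle, as noted, is the careful translation between the projective/affine polar formalism and the metric formalism of conjugate radii: I must fix one global affine chart on $N_pS$, carry $\mathcal H$ and the metric along consistently, and check orientations so that the envelope condition reproduces exactly the lemma's equations rather than a scalar multiple or a shifted version. Once that dictionary is set up, the identification is a short linear computation.
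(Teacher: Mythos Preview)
Your proposal has a genuine error in the interpretation of the statement: you take the polar of the indicatrix point $\eta(\theta)$ \emph{with respect to the indicatrix $\mathcal E$ itself}. But the polar of a point lying on a conic, with respect to that same conic, is simply the tangent line to the conic at that point (as the paper notes just after the definition of pole and polar). Hence your family $\{L(\theta)\}$ is the family of tangent lines to $\mathcal E$, and its envelope is $\mathcal E$ again, not the characteristic curve. Indeed, carrying out your own computation, the system $\phi(\theta)\cdot(x-\mathcal H)=1$, $\phi'(\theta)\cdot(x-\mathcal H)=0$ with $\phi(\theta)=\mathcal A^{-T}w_0$ forces $\mathcal A^{-1}(x-\mathcal H)\parallel w_0$ with unit coefficient, i.e.\ $x=\mathcal H+\mathcal A w_0=\eta(\theta)$.

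The intended reference conic is the \emph{unit circle centred at the origin} of $N_pS$ (this is what ``with respect to the origin'' means in Kommerell's theorem immediately after). With that reading, the polar of $\eta=\sff(u)$ is the line $\{n:\ n\cdot\eta=1\}$, and the proof becomes very short: the characteristic point $n(u)$ already lies on this line by the lemma ($n\cdot\sff(u)=1$), so one only needs to check tangential contact, i.e.\ $\dfrac{d}{d\theta}n(u)\perp\eta$. Differentiating $n\cdot\eta=1$ gives $\dfrac{dn}{d\theta}\cdot\eta+n\cdot\dfrac{d\eta}{d\theta}=0$, and since $\dfrac{d\eta}{d\theta}\parallel\zeta$ while $n\cdot\zeta=0$, the result follows. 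Once you correct the reference conic, all the machinery with $\mathcal A^{-T}$ and the centre $\mathcal H$ disappears; the ``bookkeeping'' you anticipated being the crux is not needed.
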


\begin{figure}[thhh]
\begin{center}
\includegraphics[width=0.75 \linewidth]{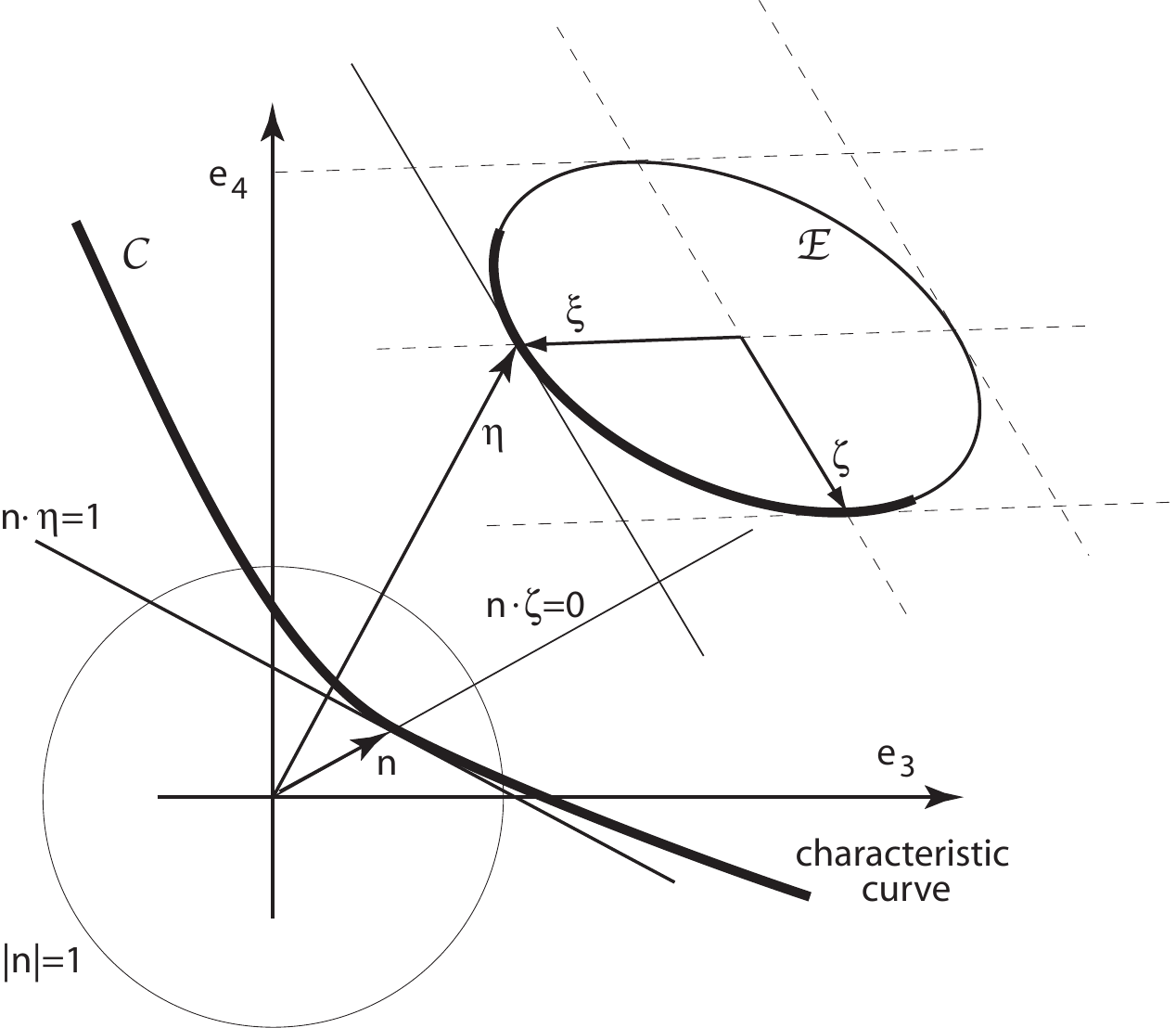}
\end{center}
\caption{Characteristic curve as the evolvent of the family $n\cdot \sff(u)=n\cdot \eta=1$}\label{charcev}
\end{figure}

\begin{proof}

The polar of a point $\eta$  with respect to the unit circle centred at the origin is the line $\eta\cdot n=1$. We have to prove that the characteristic curve is the evolvent of the family of lines $n\cdot \sff(u)=n\cdot \eta=1$ in the normal space $N_pS$.

Let $n(u)$ be the point of contact  of the line $n\cdot \sff(u)=n\cdot \eta=1$ with the characteristic curve $\mathcal C$, where $n\cdot \zeta=0$; we have to prove that it is a tangency point, and this is equivalent to prove that:
\[
\dfrac{d}{du}n(u) \perp \eta
\]

Deriving $=n\cdot \eta=1$ with respect to $u$:
\[
\dfrac{d}{du}n(u) \cdot \eta +n\cdot \dfrac{d}{du}\eta\equiv 0
\]
and as:
\[
\dfrac{d}{du}\eta\parallel \zeta, \qquad n\cdot \zeta=0
\]
it follows that:
\[
\dfrac{d}{du}n(u) \cdot \eta \equiv 0
\]
\end{proof}

\begin{theorem}[Kommerell\cite{kom}]
The characteristic curve is the polar conjugate of the indicatrix or  curvature ellipse with respect to the origin. It is an ellipse, a parabola or a hyperbola  as the point is elliptic, parabolic or hyperbolic.
\end{theorem}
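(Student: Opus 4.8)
The plan is to establish the theorem in two parts: first that the characteristic curve coincides with the polar conjugate of the curvature ellipse with respect to the origin, and second that this polar conjugate is a conic whose type (ellipse/parabola/hyperbola) is governed by the position of the origin relative to the indicatrix, i.e.\ by the sign of $\Delta$.

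For the first part, I would invoke the previous proposition, which identifies the characteristic curve $\mathcal C$ as the evolvent of the family of polars $\{\, n\cdot\eta=1 \,\}$, where $\eta=\sff(u)$ ranges over the indicatrix as $u$ runs over the unit circle. By the very definition of \emph{polar conjugate}, the polar conjugate of the indicatrix with respect to the origin is the locus of the poles of all tangents to the indicatrix; but the pole of a tangent line with respect to the unit circle is precisely the point of contact of that tangent with the envelope (evolvent) of the family, and the line $n\cdot\eta=1$ is exactly the polar of the indicatrix point $\eta$ with respect to the unit circle. Since polarity with respect to the origin here means polarity with respect to the (unit) circle centred at the origin, the two descriptions match: the locus of poles of the tangents to $\mathcal E$ equals the evolvent of the polars of the points of $\mathcal E$. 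This step is essentially bookkeeping once one unwinds the definitions and uses the reciprocity of pole and polar.

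For the second part, I would compute the characteristic curve explicitly in a canonical moving frame (Proposition~\ref{canframe}), so that $b\equiv 0$, $e\equiv g$, and $\mathcal A=\bigl[\begin{smallmatrix}\frac12(a-c)&0\\0&f\end{smallmatrix}\bigr]$ after also recentring; then $\eta(\theta)=\mathcal H+\mathcal A w$ with $w=(\cos 2\theta,\sin 2\theta)$, and $\zeta(\theta)=\mathcal A(-\sin 2\theta,\cos 2\theta)$. The condition defining $n=n(\theta)$ from the earlier lemma, namely $n\cdot\eta=1$ and $n\cdot\zeta=0$, is a linear $2\times 2$ system in the components of $n$; solving it expresses $n$ rationally in $\cos 2\theta,\sin 2\theta$, i.e.\ as a rational parametrisation of a conic. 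Eliminating $\theta$ gives a quadratic equation for the characteristic curve, and its discriminant can be read off in terms of the entries of $\mathcal A$ and $\mathcal H$. The key identity to extract is that this discriminant is (a positive multiple of) $\Delta$ — indeed, by the formulas~(\ref{Delta}) and the projective-duality interpretation, the origin lies outside, on, or inside $\mathcal E$ exactly when $\Delta<0$, $\Delta=0$, $\Delta>0$, and polar duality interchanges "origin outside $\mathcal E$" with "$\mathcal C$ is a hyperbola", "origin on $\mathcal E$" with "$\mathcal C$ is a parabola", and "origin inside $\mathcal E$" with "$\mathcal C$ is an ellipse".

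The main obstacle I anticipate is the last identification: showing cleanly that the type of the polar-conjugate conic is exactly controlled by the position of the origin relative to $\mathcal E$, without drowning in the algebra of the $2\times2$ solve. The conceptual reason is standard projective duality — a line meets a conic in $0$, $1$, or $2$ points according as its pole is inside, on, or outside; dualising, the polar conjugate is "seen" by the origin as bounded/tangent/unbounded — but to make this rigorous in the affine (non-homogeneous) setting one must track the line at infinity: the characteristic conic $\mathcal C$ has a point at infinity precisely when some polar $n\cdot\eta=1$ fails to meet $\mathcal E$ properly, equivalently when the origin sees a tangent direction of $\mathcal E$, equivalently when the origin lies on or outside $\mathcal E$. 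I would handle this either by the explicit discriminant computation above (the honest but tedious route) or by passing to the projective completion, where the indicatrix and its polar conjugate are genuinely dual conics and the statement reduces to counting intersections of the polar conjugate with the line at infinity, which is the dual of the pencil of lines through the origin intersected with $\mathcal E$. The parabolic (inflection/parabolic-point) case $\Delta=0$ is the delicate borderline and should be checked separately, noting that then $\mathcal C$ is tangent to the line at infinity, hence a parabola.
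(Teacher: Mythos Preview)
Your proposal is correct, and your second route for the conic type (counting points at infinity on $\mathcal C$ via tangents to $\mathcal E$ through the origin) is exactly what the paper does. The main difference lies in how the first part is argued.

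You deduce the identification of $\mathcal C$ with the polar conjugate of $\mathcal E$ from the preceding proposition (``$\mathcal C$ is the envelope of the polars of the points of $\mathcal E$'') together with the abstract pole--polar reciprocity, so that the envelope of polars of points equals the locus of poles of tangents. The paper instead argues in the opposite direction and more concretely: it uses the classical description of polarity with respect to a circle, whereby the polar conjugate of $\mathcal E$ with respect to the unit circle is the inverse (in that circle) of the \emph{pedal curve} of $\mathcal E$ from the origin. Writing the pedal point as $\rho=\eta-\dfrac{\eta\cdot\zeta}{|\zeta|^2}\,\zeta$ and its inverse as $n=\rho/|\rho|^2$, a short computation shows $n\cdot\zeta=0$ and $n\cdot\eta=1$, which are precisely the defining equations of the characteristic curve from the earlier lemma. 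What you invoke as a duality is then recorded by the paper only as a remark after the proof. Your approach is cleaner conceptually and avoids the pedal/inversion detour; the paper's approach is more explicit and yields the pedal-curve picture (and hence the asymptote description) for free. Your proposed explicit $2\times 2$ solve in the canonical frame would also work but is unnecessary once you adopt the tangents-through-the-origin argument, which is what the paper does for the ellipse/parabola/hyperbola trichotomy.
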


\begin{proof}
When the conic $\mathcal C$ is a circumference, the pole of a line with repeat to $\mathcal C$ is the inverse with respect to the circumference of the foot of the perpendicular from the centre of the circumference to the line. Thus the polar conjugate of the indicatrix  with respect to the unit circle is the inverse  with respect to that circle of the pedal curve of the curvature ellipse.

Let $\eta=\eta(\theta)=\sff(u)$ be the point on the indicatrix $\mathcal E$, or curvature ellipse, corresponding to $u=(\cos\theta,\sin\theta)$, and let $\zeta$ and $\xi$ be conjugate radii of $\mathcal E$ as before, with $\eta=\mathcal H+\xi$. Then $\zeta$ is parallel to the tangent to $\mathcal E$ at $\eta$.

The pedal curve of $\mathcal E$ (with respect to the origin) can be written as $\rho=\rho(\theta)$ with:
\[
\rho=\eta+\dfrac{\eta\cdot\zeta}{|\zeta|^2}\zeta
\]

From the definition of pedal curve, the locus of the intersection of a tangent to the curve with its normal line passing through the origin, we have $\rho\cdot \zeta=0$.

It is easy to see that:
\[
|\rho|^2=|\eta|^2-\left|\dfrac{\eta\cdot\zeta}{|\zeta|^2}\zeta\right|^2=|\eta|^2-|\eta|^2\cos ^2\tau, \qquad \cos\tau=\dfrac{\eta\cdot\zeta}{|\eta||\zeta|}
\]
and therefore $|\rho|=|\eta||\sin\tau|$; we also have $\rho\cdot\eta=|\rho|||\eta||\sin\tau|$.

Thus the inverse of the pedal curve with respect to the unit circle is given by:
\[
n=\dfrac{1}{|\rho|^2}\rho
\]
and we have $n\cdot \zeta=0$. It is only necessary to show that $n\cdot\eta=1$:
\[
n\cdot\eta=\dfrac{1}{|\rho|^2}\rho\cdot\eta=\dfrac{1}{(|\eta||\sin\tau|)^2}|\rho||\eta||\sin\tau|=1
\]

It is well known that the polar conjugate of a conic is another conic.
Concerning the asymptotes, there is a point at infinity in the characteristic curve when the pedal curve passes through the origin, or equivalently when a tangent to the indicatrix passes through the origin; there are 0, 1, or 2 such points when the origin is inside, on or outside the indicatrix.
\end{proof}

\begin{figure}[hhhh]
\begin{center}
\includegraphics[width=0.75 \linewidth]{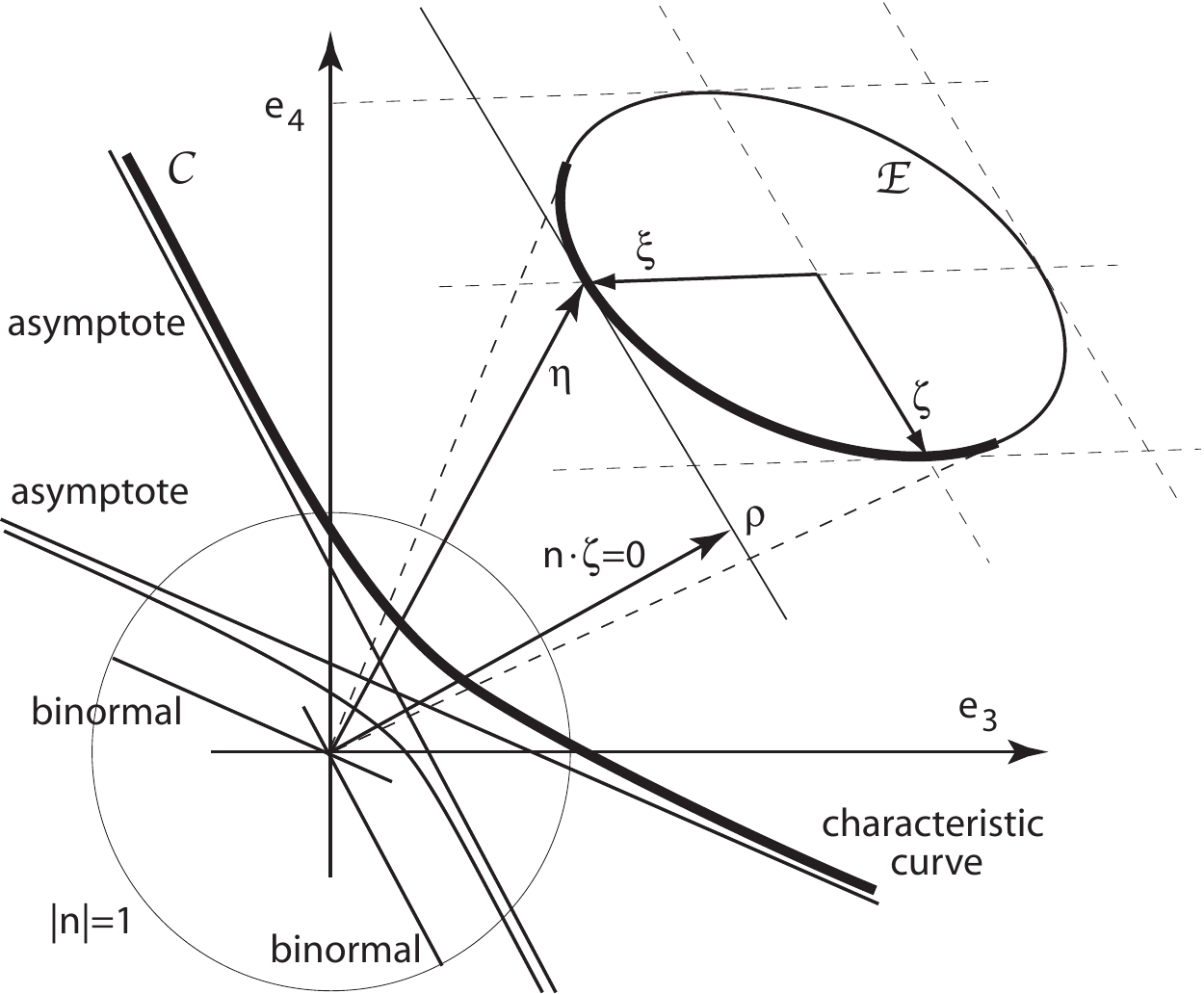}
\end{center}
\caption{Characteristic curve at a hyperbolic point}\label{charc}
\end{figure}

\begin{rem}
The asymptotes are parallel to the respective binormal.
\end{rem}

\begin{rem}
The relation between the proposition and Kommerell theorem is an instance of projective duality: the characteristic curve is the locus of the poles of the tangents to the indicatrix, or  the tangents to the characteristic curve are the polars of the points in the indicatrix.
\end{rem}

\phantom{saltapagina}
\section{Singularities of height functions}\label{shf}

The \emph{height function} on $S$ corresponding to $b\in\mathbf R^4$ is the map  $f_b(p)=f(p,b)$, where:
\[
f:S\times \mathbf R^4\longrightarrow\mathbf R,\quad f(p,b)=p\cdot b
\]

\begin{prop}[\cite{dcc}]
The critical points of $f$ are exactly the points of the normal space $NS$. Moreover:
\begin{itemize}
\item If $\Delta(p)>0$, then $f_b(p)=f(p,b)$ has a non degenerate critical point  at $p$ for all $b\in N_pS$.
\item If $\Delta(p)<0$, then $f_b(p)=f(p,b)$ has a   degenerate critical point at $p$ for exactly two independent normal directions.
\item If $\Delta(p)=0$, then $f_b(p)=f(p,b)$ has a  degenerate critical point at $p$  for exactly one normal direction.
\end{itemize}
\end{prop}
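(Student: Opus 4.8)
The plan is to compute the Hessian of the height function $f_b$ at a critical point directly in Monge form and read off when it degenerates. First I would establish the first clause: if $p$ is a critical point of $f$, then $\de f_b(p) = 0$ means $b \cdot T_1 = b \cdot T_2 = 0$, i.e. $b \perp T_pS$, so $b \in N_pS$; conversely any such $b$ gives a critical point. So the critical set of $f$ is exactly $NS$, fibered over $S$ by the normal planes.

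Next, fix $p$ and assume (after a translation and a rigid motion) that $p$ is the origin, $T_pS$ is the $(x,y)$-plane, and $S$ is the graph $(x,y) \mapsto (x,y,\varphi(x,y),\psi(x,y))$ with $j^1\Phi(0)=0$. Write $b = (0,0,b_3,b_4) \in N_pS$. Then $f_b(x,y) = b_3\,\varphi(x,y) + b_4\,\psi(x,y)$, which has vanishing first jet at the origin, confirming again that the origin is a critical point. Its Hessian at the origin is $b_3\,\He(\varphi)(0) + b_4\,\He(\psi)(0) = b_3\,\mathcal M_1 + b_4\,\mathcal M_2$, since at the origin $E=G=\hat E = \hat G = 1$, $F = \hat F = 0$, so the coefficients $a,b,c$ and $e,f,g$ from (\ref{abc})--(\ref{efg}) are exactly the second derivatives of $\varphi$ and $\psi$. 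Thus the critical point $p$ is degenerate for the direction $(b_3,b_4)$ precisely when $\det(b_3\,\mathcal M_1 + b_4\,\mathcal M_2) = 0$.

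So the whole statement reduces to counting the directions $(b_3,b_4) \in \mathbf{RP}^1$ on which the binary quadratic form
\[
\det(b_3\,\mathcal M_1 + b_4\,\mathcal M_2) = (ac-b^2)\,b_3^2 + (ag + ce - 2bf)\,b_3 b_4 + (eg - f^2)\,b_4^2
\]
vanishes. Its discriminant is $(ag+ce-2bf)^2 - 4(ac-b^2)(eg-f^2) = -4\Delta$, using formula (\ref{Delta}). Hence this quadratic in $(b_3,b_4)$ has two distinct real roots, a double real root, or no real roots according as $\Delta(p) < 0$, $\Delta(p) = 0$, or $\Delta(p) > 0$. That gives exactly the three bulleted cases (two independent degenerate normal directions; one degenerate normal direction; none, so $f_b$ is nondegenerate at $p$ for every $b \in N_pS$).

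The only real work is the algebraic identity that the discriminant of $\det(b_3\mathcal M_1 + b_4\mathcal M_2)$ equals $-4\Delta$; but this is immediate from the second expression for $\Delta$ in (\ref{Delta}), $\Delta = (ac-b^2)(eg-f^2) - \frac14(ag+ce-2bf)^2$, so there is no genuine obstacle — the main point is simply to identify the Hessian with $b_3\mathcal M_1 + b_4\mathcal M_2$ and recognize $\Delta$ in its discriminant. One should note that when $\Delta(p)=0$ and $\rank \mathcal M(p) \le 1$ (the inflection case), the Hessian $b_3\mathcal M_1 + b_4\mathcal M_2$ can vanish identically for a single direction rather than just drop rank, but this is still "exactly one normal direction" with a degenerate critical point, consistent with the stated conclusion.
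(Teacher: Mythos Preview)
Your proof is correct and follows essentially the same route as the paper: Monge-form parametrisation, identification of the Hessian of $f_b$ at the origin as $b_3\mathcal M_1+b_4\mathcal M_2$, and recognition that its determinant is a quadratic in $(b_3,b_4)$ with discriminant $-4\Delta$ via (\ref{Delta}). Your added remark about the inflection case (where the Hessian vanishes identically for one direction) is a nice clarification that the paper leaves implicit.
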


\begin{proof}
The surface $S$ is  locally given around $p$ by a parametrisation:
\[
\Xi : (x,y)\mapsto (x,y,\varphi(x,y),\psi(x,y))
\]
where $\Phi=(\varphi, \psi)$ has vanishing first jet at the origin,  $j^1\Phi(0)=0$, and $\Xi(0)=p$. Then:
\[
f_b(x,y)=b_1x+b_2y+b_3\varphi(x,y)+b_4 \psi(x,y)
\]
having a critical point at the origin implies $b_1=b_2=0$, therefore $b\in N_pS$; we write $b=(0,0,n_1,n_2)$.

The second derivative of $f_b$ is given by:
\[
D^2f_b(x,y)=
\left[
\begin{matrix}
n_1\varphi_{xx}+n_2\psi_{xx} & n_1\varphi_{xy}+n_2\psi_{xy}\\
n_1\varphi_{xy}+n_2\psi_{xy} & n_1\varphi_{yy}+n_2\psi_{yy}
\end{matrix}
\right ]
\]
and so the Hessian of $f_b$ is:
\[
Hess (f_b)=H_{\varphi}n_1^2+Qn_1n_2+H_{\psi}n_2^2
\]

At the origin, the vanishing of:
\[
Hess (f_b)(0)=(ac-b^2)n_1^2+(ag+ce-2bf)n_1n_2+(eg-f^2)n_2^2
\]
is the condition for the critical point to be degenerate. This is a quadratic equation with discriminant (\ref{Delta}):
\[
(ag+ce-2bf)^2-4(ac-b^2)(eg-f^2)=-4\Delta
\]
and the other statements follow.
\end{proof}

The surface $S$ has a higher order contact with the hyperplane normal to $b$ containing the tangent plane to $S$ at $p$, and as remarked in \cite{dcc} this shows that the binormal for a surface in $\Rq$ is an analogue to the binormal of a curve in $\Rt$. 

If the height function $f_b$ has a critical point at $p$, then $f_{\lambda b}$, with $\lambda>0$, has the same type of singular point at $p$; we will consider therefore the height map as being defined on $\mathbf S^3$:
\[
f:S\times \mathbf S^3\longrightarrow\mathbf R,\quad f(p,b)=f_b(p)=p\cdot b
\]
The critical points of $f$ are the points of the unit normal space $N^1S=\{(p,b)\mid p\in S, b\in N_pS, |b|=1\}$. 
If $f_b$ has a degenerate critical point at $p$, in general the kernel of  its second derivative defines a direction in the tangent space $T_pS$ (with the usual identifications), and that is an asymptotic direction:

\begin{prop}\label{binas}
Let $p\in S$ be a degenerate critical point of $f_b$ for some $b\in N_pS$,  $|b|=1$. If the kernel of $D^2f_b$ is one dimensional, it defines an asymptotic direction and $b$ defines a binormal.
\end{prop}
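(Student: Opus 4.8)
The plan is to translate both assertions into linear algebra for the second-fundamental-form matrices $\mathcal M_1,\mathcal M_2$ and then appeal to the geometric descriptions of asymptotic directions and binormals already obtained. First I would put $S$ in Monge form with $p$ at the origin and $j^1\Phi(0)=0$; then at $0$ one has $E=G=\hat E=\hat G=1$, $F=\hat F=0$, so by (\ref{abc})--(\ref{efg}) the coefficients of $\sff$ are exactly $\varphi_{xx},\varphi_{xy},\varphi_{yy},\psi_{xx},\psi_{xy},\psi_{yy}$, and $T_1,T_2$ are orthonormal there, hence equal $e_1,e_2$, so the kernel of the Hessian is a genuine line in $T_pS$. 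Writing $b=(0,0,n_1,n_2)$ with $n_1^2+n_2^2=1$, the Hessian matrix computed in the proof of the preceding proposition is, at the origin with this normalisation, $D^2f_b(0)=n_1\mathcal M_1+n_2\mathcal M_2$; degeneracy of the critical point means this matrix is singular, and the one-dimensional-kernel hypothesis means it has rank $1$. Let $\tau=x e_1+y e_2\neq 0$ span the kernel, so that $n_1\mathcal M_1\tau+n_2\mathcal M_2\tau=0$.

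To identify $\tau$ as an asymptotic direction I would note that the quadratic form $\mathfrak N$ from the proof of the proposition classifying points by the sign of $\Delta$ is nothing but the $2\times2$ determinant $\det[\,\mathcal M_1\tau\mid\mathcal M_2\tau\,]$: expanding either expression produces $(af-be)x^2+(ag-ce)xy+(bg-cf)y^2$. Since $(n_1,n_2)\neq(0,0)$, the kernel relation forces $\mathcal M_1\tau$ and $\mathcal M_2\tau$ to be proportional, hence $\mathfrak N(\tau)=0$; and that proof shows precisely that $\mathfrak N(\tau)=0$ is the condition for the image of $\de_N\tau$ to be one dimensional, spanning a line through the origin tangent to the curvature ellipse and perpendicular to a binormal. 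Thus $\tau$ is an asymptotic direction, with associated tangent line spanned by $\sff(\tau)=|\tau|^2\eta(\theta)$, where $u=\tau/|\tau|=\cos\theta\,e_1+\sin\theta\,e_2$.

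Finally I would check that $b$ is the corresponding binormal by a one-line computation: since $\sff(\tau)=\sff_1(\tau)e_3+\sff_2(\tau)e_4$, the normal inner product is $b\cdot\sff(\tau)=n_1\sff_1(\tau)+n_2\sff_2(\tau)$, which is the value on $\tau$ of the quadratic form with matrix $n_1\mathcal M_1+n_2\mathcal M_2$, hence $0$ because $\tau$ lies in its kernel. So $b\in N_pS$ is orthogonal to the tangent line to the indicatrix at the asymptotic point, which is exactly the defining property of the binormal attached to $\tau$. As a consistency check one may add that, by the preceding proposition, a degenerate critical point forces $\Delta(p)\le 0$, so $p$ is hyperbolic or parabolic, or an inflection point; the one-dimensionality of the kernel rules out the inflection sub-case where $n_1\mathcal M_1+n_2\mathcal M_2$ vanishes identically.

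The step I expect to be the main obstacle is the middle one — matching the abstract kernel of $D^2f_b$ with the combinatorially defined asymptotic direction — since it relies on recognising $\mathfrak N$ as $\det[\,\mathcal M_1\tau\mid\mathcal M_2\tau\,]$ and then re-using the pedal-curve and evolvent picture from the $\Delta$-proposition; the rest is routine. A minor point to watch is the degenerate case $\sff(\tau)=0$ (an inflection point), where the tangent line to the indicatrix at the asymptotic point collapses: the identity $b\cdot\sff(\tau)=0$ remains true but becomes vacuous, and the binormal must instead be pinned down by the inflection-point convention that it is normal to the line carrying the radial segment.
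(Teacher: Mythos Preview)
Your argument is correct and takes a genuinely different route from the paper's. The paper simplifies by a coordinate choice: it rotates so that $b=e_4$ and the kernel of $D^2f_b$ is spanned by $e_1$; this forces $e=f=0$, $g=2\alpha\ne 0$, and then a direct computation of $\eta(0)$ and $\eta'(0)$ shows both are parallel to $e_3$, so the tangent line to the indicatrix at $\eta(0)$ lies along $e_3$ and $b=e_4$ is orthogonal to it. Your approach is more invariant: you keep $b=(0,0,n_1,n_2)$ general, observe that $\mathfrak N(\tau)=\det[\mathcal M_1\tau\mid\mathcal M_2\tau]$, deduce $\mathfrak N(\tau)=0$ from the kernel relation, and invoke the $\Delta$-proposition to identify $\tau$ as asymptotic; then $b\cdot\sff(\tau)=0$ pins down $b$ as the binormal. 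Your route makes the dependence on the earlier characterisation of asymptotic directions explicit and avoids any coordinate reduction; the paper's route is self-contained and somewhat shorter.

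One correction to your closing remark: the case $\sff(\tau)=0$ is not the inflection case but the \emph{parabolic} case --- the origin lies on the indicatrix exactly when $\sff$ vanishes on some tangent direction, and that direction is then the unique asymptotic one. There your identity $b\cdot\sff(\tau)=0$ is indeed vacuous, but the fix is already implicit in your argument: since $(n_1\mathcal M_1+n_2\mathcal M_2)\tau=0$ holds as a vector equation, you get $b\cdot\de_N\tau(w)=0$ for every $w\in T_pS$, hence $b$ is orthogonal to the whole (one-dimensional) image of $\de_N\tau$, which by the $\Delta$-proposition is the tangent line to the indicatrix. This covers the parabolic case without appealing to any inflection-point convention.
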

\begin{proof}
We choose coordinates so that $b$ is the fourth axis, or $n_1=0$, $n_2=1$ with the notation of the previous proposition. Furthermore a linear change of coordinates, a rotation around the origin, allows us to assume that the kernel of  its second derivative is the first axis. This means that:
\[
f_b(x,y)=\psi(x,y)=\alpha y^2+O(3), \quad \alpha\ne 0
\]

An unit tangent vector at the origin has the form $v=(\cos\theta, \sin \theta, 0,0)$ and:
\[
\sff (v)=(0,0,\sff_1(v), \alpha\sin^2\theta)
\]
therefore:
\[\eta(0)=\sff(e_1) \parallel e_3, \quad \dfrac{d}{d\theta}\eta (0) \parallel e_3
\]

This means that $\eta(0)$ spans a tangent direction to the curvature ellipse and 
$
b \perp =\eta(0)
$, so that the kernel of the second derivative defines an asymptotic direction and $b$ is a binormal.
\end{proof}

When we consider the contact of a line $l$ with a surface $S$ at a point $p\in S$,  it is clear that the line has to be tangent to the surface at $p$ to have higher order contact. We take the intersection $\gamma$ of the hyperplane through the point containing the line and the normal space $N_pS$; the osculating plane $P_l$  of $\gamma$ at $p$ is defined by the line $l$, tangent to $\gamma$, and the direction spanned by $n _l=\sff(u_l)$, where $u_l$ is a unit vector in $l$. Note that, if the $p$ is not a parabolic point, we have $n_l\ne 0$.

It is natural to say that higher order contact means the vanishing of more derivatives of the component of $\gamma$ orthogonal to the osculating plane. This is equivalent to a higher order singularity of the height function corresponding to a direction normal to the osculating plane, and thus leads to an asymptotic direction.

We could also project $S$ on $\Rt$ along a  normal direction orthogonal to a binormal to obtain a smooth surface $\hat S\subset \Rt$; with coordinates chosen as before, this is the graph of the $\psi$. It is easy to see that the projection of the asymptotic direction corresponding to the chosen binormal is an asymptotic direction (in the usual sense for surfaces in $\Rt$) of $\hat S$:

Since $\hat S$ is the graph of $\psi(x,y)=y^2+O(3)$, it has an asymptotic direction, the $x$ axis. The asymptotic direction of $S$ in $\Rq$ is also the $x$ axis, as seen in prop. \ref{binas}.

\begin{prop}[\cite{dcc}]
Let $p\in S$ be a parabolic point. If $p$ is not an  inflection point then it is a fold or cusp (or higher order) singularity of the height function and:
\begin{itemize}
\item $p$ is a fold singularity when the asymptotic direction is not tangent to the line $\Delta^{-1}(0)$ of parabolic points.
\item $p$ is a cusp (or hgher order) singularity when the asymptotic direction is  tangent to the line $\Delta^{-1}(0)$ of parabolic points.
\end{itemize}
\end{prop}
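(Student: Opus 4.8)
The plan is to reduce this to a local computation at a parabolic point using the Monge form, following the same coordinate-choice strategy used in Proposition~\ref{binas} and in the proposition on generic inflection points. First I would place $p$ at the origin with $T_pS$ the $(x,y)$-plane, and write $S$ as the graph of $(\varphi,\psi)$ with $j^1\Phi(0)=0$. Since $p$ is parabolic but not an inflection point, the curvature ellipse passes through the origin with the origin \emph{on} it but $\mathcal M(p)$ has rank~$2$ (rank~$\le 1$ would make $p$ an inflection point, by the inflection-point theorem). By the earlier proposition on height functions, there is a unique binormal direction $b\in N_pS$ for which $f_b$ has a degenerate critical point; choose coordinates so that $b$ is the fourth axis, and then rotate in the $(x,y)$-plane so that the kernel of $D^2f_b(0)$ is the $x$-axis. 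As in Proposition~\ref{binas}, this forces
\[
\psi(x,y)=\alpha y^2+\text{(cubic terms)}+O(4),\qquad \alpha\ne 0,
\]
with no $x^2$ and no $xy$ quadratic terms, and the $x$-axis is the asymptotic direction.

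The second step is to compute $f_b=\psi$ up to third order and read off the fold/cusp dichotomy from singularity theory: for a function of two variables with $j^1=0$ whose Hessian has one-dimensional kernel along the $x$-axis, the singularity is a fold (i.e.\ $A_2$, equivalent to $x^3\pm y^2$) precisely when the coefficient of $x^3$ in the Taylor expansion is nonzero, and it is a cusp or worse ($A_{\ge 3}$) when that coefficient vanishes. So I would write $\psi(x,y)=\alpha y^2+\beta x^3+\gamma x^2 y+\delta x y^2+\varepsilon y^3+O(4)$ and the fold condition becomes simply $\beta\ne 0$.

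The third step — and the crux of the argument — is to show that $\beta\ne 0$ is equivalent to the asymptotic direction (the $x$-axis) being transverse to the parabolic curve $\Delta^{-1}(0)$ at the origin. For this I would compute $\Delta$ near the origin using formula~(\ref{Delta}) together with the expressions~(\ref{abc})--(\ref{efg}) for $a,\dots,g$; at the origin $E=G=\hat E=\hat G=1$, $F=\hat F=0$, so to leading order the coefficients $e,f,g$ are the second partials of $\psi$, namely $e=6\beta x+2\gamma y+O(2)$, $f=2\gamma x+2\delta y+O(2)$, $g=2\delta x+6\varepsilon y+\cdots$ wait — more carefully $e=\psi_{xx}=6\beta x+2\gamma y+O(2)$, $f=\psi_{xy}=2\gamma x+2\delta y+O(2)$, $g=\psi_{yy}=2\alpha+\cdots$, so in fact $g(0)=2\alpha\ne 0$ and it is $e(0)=f(0)=0$ that encodes parabolicity. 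Since $\Delta(0)=0$ and $\mathcal M_1(0)$ has rank determined by $\varphi$, I would expand $\Delta=(ac-b^2)(eg-f^2)-\tfrac14(ag+ce-2bf)^2$ linearly in $(x,y)$; using $\eta(0)\parallel e_3$, the quadratic forms $\sff_1$ and $\sff_2$ have exactly the common root corresponding to the $x$-axis, and the linear part $\de\Delta(0)$ turns out to be a nonzero multiple of $\beta\,\de x$ plus a multiple of $\de y$, so that the tangent line to $\Delta^{-1}(0)$ is the $x$-axis exactly when the coefficient of $\de x$ vanishes, i.e.\ when $\beta=0$. Thus $\beta\ne 0$ (fold) $\Longleftrightarrow$ the asymptotic direction is transverse to $\Delta^{-1}(0)$, and $\beta=0$ (cusp or higher) $\Longleftrightarrow$ it is tangent.

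The main obstacle will be carrying out the expansion of $\Delta$ to first order cleanly enough to isolate the coefficient of $\de x$ and verify it is a nonzero multiple of $\beta$: the formulae~(\ref{abc})--(\ref{efg}) involve the metric coefficients $E,F,G,\hat E,\hat F$, whose first-order corrections a priori contribute, and one must check that the contributions from $\varphi$ (which is a generic quadratic-plus-higher) do not conspire to kill the $\beta$ term or to introduce a spurious one. The clean way around this is to note that $\Delta^{-1}(0)$ agrees to first order with the zero set of the leading part of $Hess(f_b)$'s discriminant, and that discriminant is a polynomial in the second partials of $\varphi$ and $\psi$ whose $(x,y)$-linearization depends on $\psi$ only through $e,f,g$; the only $\psi$-derivative that is linear in $x$ with nonzero coefficient and contributes to the $\de x$-direction is $\psi_{xxx}(0)=6\beta$, which pins down the equivalence. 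Once that is in hand, the fold-versus-cusp classification of $\psi$ itself is the standard $A_2/A_{\ge3}$ criterion and requires no further work.
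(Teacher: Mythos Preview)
Your approach is correct and essentially matches the paper's: same coordinate choice from Proposition~\ref{binas}, same identification of the fold/cusp dichotomy with $\psi_{xxx}(0)\ne 0$ versus $\psi_{xxx}(0)=0$, and same reduction of the tangency condition to $\Delta_x(0)=0$. For your ``main obstacle'' the paper sidesteps the brute-force expansion: at the origin both $H_\psi=eg-f^2$ and $ag+ce-2bf$ vanish (the latter since $\Delta(0)=0$ together with $H_\psi(0)=0$ forces $a=\varphi_{xx}(0)=0$), so the product rule gives $\Delta_x(0)=H_\varphi(0)\,H_{\psi,x}(0)=(-b^2)(2\alpha\,\psi_{xxx}(0))$, with $b=\varphi_{xy}(0)\ne 0$ because ${\rank}\mathcal M(p)=2$; the metric corrections you worry about are second-order and do not enter.
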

\begin{proof}
We use the coordinates of prop. \ref{binas}; then $p$ being a parabolic point means $\Delta(0)=0$, and as the asymptotic direction is the $x$ axis, the asymptotic direction being tangent to the line of parabolic points means that $\Delta_x(0)=0$.

Since:
\[
\Delta=(ac-b^2)(eg-f^2)-\dfrac{1}{4}(ag+ce-2bf)^2
\]
at the origin we have:
\[
\Delta(0)=H_\varphi H_\psi-\dfrac{1}{4}(\varphi_{xx}\psi_{yy}+\varphi_{yy}\psi_{xx}-2\varphi_{xy}\psi_{xx})^2
\]

As $H_\psi(0)=0$, we must have $\varphi_{xx}\psi_{yy}+\varphi_{yy}\psi_{xx}-2\varphi_{xy}\psi_{xx}=0$,  therefore:
\[
\Delta_x(0)=H_\varphi H_{\psi,x}
\]
and as $H_\varphi(0)\ne 0$, the condition for the asymptotic direction being tangent to the line of parabolic points becomes:
\[
H_{\psi,x}(0)=0
\]

The point $p$ is not an inflection point, so $\alpha\ne 0$, and the condition for being a fold singularity of $f_b=\psi$ is:
\[
\psi_{xx}=0, \quad \psi_{xxx}\ne 0
\]
and for being a cusp (or higher order) singularity is $\psi_{xxx}= 0$. On the other hand:
\[
H_{\psi,x}(0)=\psi_{yy}(0)\psi_{xxx}(0)=2\alpha\psi_{xxx}(0)
\]

Thus the singularity of $f_b=\psi$ is a cusp if:
\[
H_{\psi,x}(0)\ne 0
\]
and a cusp (or higher order singularity) if:
\[
H_{\psi,x}(0)=0
\]
\end{proof}

\begin{prop}[\cite{dcc}]
The inflection points of a surface correspond to umbilic singularities, or higher singularities, of the height function.
\end{prop}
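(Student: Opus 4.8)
The plan is to pass to Monge coordinates at $p$ and combine the characterisation of inflection points by $\rank\mathcal M(p)\le 1$ with the Hessian computation already used in Proposition~\ref{binas}, according to which, for $b\in N_pS$, the critical point of $f_b$ at $p$ is degenerate exactly along the kernel of the quadratic form $x\mapsto b\cdot\sff(x)$; so the singularity fails to be Morse in \emph{two} directions precisely when that whole quadratic form vanishes.

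First I would fix Monge coordinates centred at $p$, so that $S$ is the graph of $\Phi=(\varphi,\psi)$ with $j^1\Phi(0)=0$ and, as already noted, $E=G=\hat E=\hat G=1$, $F=\hat F=0$ at the origin; then the coefficients of $\sff$ at $p$ are just the second-order Taylor coefficients, $a=\varphi_{xx}$, $b=\varphi_{xy}$, $c=\varphi_{yy}$, $e=\psi_{xx}$, $f=\psi_{xy}$, $g=\psi_{yy}$, and $p$ is an inflection point iff the two quadratic forms $D^2\varphi(0)$ and $D^2\psi(0)$ are linearly dependent, i.e.\ $\rank\mathcal M(p)\le 1$. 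Assuming $p$ is an inflection point, pick $(n_1,n_2)\ne 0$ with $n_1D^2\varphi(0)+n_2D^2\psi(0)=0$; after a rotation of the normal plane, which preserves the Monge form, we may take $b=(0,0,0,1)$, so that $j^2\psi(0)=0$ and $\psi$ is cubic-plus-higher, and this $b$ is precisely the binormal of the inflection point, since $\sff(u)$ then lies on the line $\mathbf R e_3$ for every $u$, so the curvature ellipse is a radial segment of that line, to which $b$ is orthogonal. Now $f_b(x,y)=\psi(x,y)$ has a critical point at the origin with $D^2f_b(0)=D^2\psi(0)=0$; hence the critical point has corank $2$, which is exactly the condition that the singularity of $f_b$ be an umbilic singularity ($D_4^{\pm}$, when $j^3\psi(0)$ is a non-degenerate cubic form) or a more degenerate one ($D_k$ with $k\ge 5$, $E_6,E_7,E_8$, or non-simple, when $j^3\psi(0)$ is degenerate).

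Conversely, if $f_b$ has an umbilic or higher singularity at $p$ then $p$ is a corank-$2$ critical point of $f_b$, so $b\in N_pS$ by Proposition~\ref{binas}; writing $b=(0,0,n_1,n_2)$ with $(n_1,n_2)\ne 0$, the vanishing of the Hessian $D^2f_b(p)=n_1D^2\varphi(p)+n_2D^2\psi(p)$ forces $D^2\varphi(p)$ and $D^2\psi(p)$ to be proportional, so $\rank\mathcal M(p)\le 1$ and $p$ is an inflection point. The only step that really needs care, and the one I would spend the writing on, is the dictionary between the singularity-theoretic statement and the rank condition, namely that ``umbilic or higher singularity of $f_b$'' means exactly that the Hessian of $f_b$ at the critical point vanishes identically, together with the identification of the chosen direction $b$ with the binormal determined by the degenerate radial curvature ellipse; everything else is immediate from the formulae for $\sff$ at the origin and the Hessian formula of Proposition~\ref{binas}.
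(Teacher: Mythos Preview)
Your proof is correct and follows the same route as the paper: in Monge form at $p$, ``umbilic or higher'' means $D^2f_b(0)=n_1D^2\varphi(0)+n_2D^2\psi(0)=0$, which is exactly the linear dependence of the two rows of $\mathcal M(p)$, i.e.\ $\rank\mathcal M(p)\le 1$. The only cosmetic differences are that the paper skips your normal-plane rotation (working directly with $f_b=\lambda_1\varphi+\lambda_2\psi$ for the forward direction) and that your references to Proposition~\ref{binas} for the Hessian formula and for $b\in N_pS$ should really point to the proposition immediately preceding it, where those facts are actually established.
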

\begin{proof}
With the coordinates of the previous proposition, the singularity is an umbilic, or more degenerate, if:
\[
\psi_{xx}(0)=0, \quad \psi_{xy}(0)=0, \quad \psi_{yy}(0)=0
\]
therefore::
\[
\mathcal M(p)=\left[
\begin{matrix}
a&b&c\\
0&0&0
\end{matrix}
\right]
\]
and $p$ is an inflection point.

Assume now that $p$ is an inflection point; then $\rank \mathcal M(p)=1$ and there exist $\lambda_1$, $\lambda_2$ such that:
\[
\lambda_1(a,2b,c)+\lambda_2(e,2f,g)=0
\]
and so:
\[
H_{\lambda_1\varphi+\lambda_2\psi}(0)=0
\]
But this means that the height function $f_b$, with $b=(\lambda_1,\lambda_2)$, has an umbilic (or higher order) singularity.
\end{proof}

 The singularities of the family of height functions on a generic surface can be used \cite{dcc} to characterize the different points of that surface:
\begin{itemize}
\item An elliptic point $p$ is a nondegenerate critical point for any 
of the height functions associated to normal directions to $S$ at $p$. 
\item If $p$ is a hyperbolic point, there are exactly 2 normal directions 
 at $p$ such that $p$ is a degenerate critical point of their corresponding 
height functions. 
\item If $p$ is a parabolic point, there is a unique normal direction such that 
$f_b$ is degenerate at $p$.
\begin{itemize}
\item A parabolic point  $p$ is  a fold singularity of $f_b$ if and only if  the unique asymptotic direction is not tangent to the line of parabolic points $\Delta^{-1}(0)$. 
\item A parabolic point  $p$ is  a cusp singularity of $f_b$ if and only if  $p$ is a \emph{parabolic cusp} of 
$S$, where the asymptotic direction is tangent to the line of parabolic points. 
\item A parabolic point  $p$ is  an umbilic point for $f_b$ if and only if  $p$ is an inflection point of 
$S$. 
\end{itemize}
\end{itemize}

\begin{rem}
For a generic surface, the points $p$ which are a swallowtail singularity of $f_b$ do not belong to the line of parabolic points; at a swallowtail singularity one of the asymptotic directions is tangent to line of points where $f_b$ has a cusp singularity.
\end{rem}

\newpage

\end{document}